\title[Global Existence of Weak Solutions \\ for Compresssible
Navier--Stokes--Fourier Equations  \\ with the Truncated 
Virial Pressure Law]{Global Existence of Weak Solutions for \\ Compresssible
Navier--Stokes--Fourier Equations \\  with the Truncated Virial Pressure Law}
\author[D.~Bresch]{Didier Bresch}
\address{UMR 5127 CNRS, Univ. Savoie Mont Blanc, LAMA, Universit\'e Savoie Mont Blanc, 73376 Le Bourget du Lac, France}
\email{didier.bresch@univ-smb.fr}
\author[P.~Jabin]{Pierre--Emmanuel Jabin}
\address{Department of Mathematics and Huck Institutes, Pennsylvania State University, State College, PA 16801, USA}
\email{pejabin@psu.edu}
\author[F.~Wang]{Fei Wang}
\address{School of Mathematical Sciences, CMA-Shanghai, Shanghai Jiao Tong University, 
Shanghai 200240,China}
\email{fwang256@sjtu.edu.cn}
\begin{document}

\def\epsilon{\varepsilon}
\def\intint{\int\!\!\!\!\int}
\def\OO{\mathcal O}
\def\SS{\mathcal S}
\def\RR{\mathbb R}
\def\TT{\mathbb T}
\def\ZZ{\mathbb Z}
\def\HH{\mathbb H}
\def\RSZ{\mathcal R}
\def\GG{\mathcal G}
\def\eps{\varepsilon}
\def\tt{\langle t\rangle}
\def\erf{\mathrm{Erf}}
\def\red#1{\textcolor{red}{#1}}
\def\blue#1{\textcolor{blue}{#1}}
\def\mgt#1{\textcolor{magenta}{#1}}
\def\ff{\rho}
\def\gg{\gamma_{art}}
\def\ggg#1{\gamma_{art,#1}}
\def\phiyy{\partial_{yy} \phi}
\def\tilde{\widetilde}
\def\sqrtnu{\sqrt{\nu}}
\def\ww{w}
\def\ft#1{#1_\xi}
\def\cl{\mathcal{L}}
\def\ck{\mathcal{K}}
\def\ce{\mathcal{E}}
\def\cs{\mathcal{S}}
\def\fei#1{\textcolor{Magenta}{#1}}
\def\nn{\nonumber\\}
\def\sp{\ \ \ \ \ \ \ \ \ \ \ \ \ \ \ \ \ \ \ \ \ \ \ \ \ \ \ \ \ \ \ \ \ \ \ \ \ \ \ \ \ \ \ }
\def\ss{s_{0}}
\def\TH{\vartheta_{2}}
\def\TTT{\vartheta_{3}}
\def\bep{C\lambda^{-1}}
\def\yy{\tilde y}
\def\dq{\qquad}
\def\DD{\mathcal{D}}

\def\ee{\tilde e}
\def\pp{\tilde P}
\def\nn{\nonumber\\}

\newtheorem{theorem}{Theorem}[section]
\newtheorem{corollary}[theorem]{Corollary}
\newtheorem{proposition}[theorem]{Proposition}
\newtheorem{lemma}[theorem]{Lemma}

\theoremstyle{definition}
\newtheorem{definition}{Definition}[section]
\newtheorem{remark}[theorem]{Remark}

\def\theequation{\thesection.\arabic{equation}}
\numberwithin{equation}{section}

\def\dist{\mathop{\rm dist}\nolimits}    
\def\sgn{\mathop{\rm sgn\,}\nolimits}    
\def\Tr{\mathop{\rm Tr}\nolimits}    
\def\div{\mathop{\rm div}\nolimits}    
\def\supp{\mathop{\rm supp}\nolimits}    
 \def\indeq{\qquad{}\!\!\!\!}                     
\def\period{.}                           
\def\semicolon{\,;}                      

\def\nts#1{{\cor #1\cob}}
\def\cor{\color{red}}
\def\cog{\color{green}}
\def\cob{\color{black}}
\def\coe{\color{blue}}
\def\ao{a_0}

\def\namedlabel#1#2{\begingroup
	#2%
	\def\@currentlabel{#2}%
	\phantomsection\label{#1}\endgroup
}

\def\fei#1{\textcolor{violet}{#1}}

\begin{abstract}
  This paper concerns the  existence of global  weak solutions {\it \`a la Leray}  for compressible Navier--Stokes--Fourier system with periodic boundary conditions  and the truncated virial pressure law which is assumed to be  thermodynamically unstable. More precisely, the main novelty is that  the pressure law is not assumed to be monotone with respect to the density. This provides the first global weak solutions result for the compressible Navier-Stokes-Fourier system with such kind of pressure law which is strongly used as a generalization of the perfect gas law. The paper is based on a new construction of approximate solutions through an iterative scheme and fixed point procedure which could be very helpful to 
design efficient numerical schemes. Note that our method involves the recent paper by the authors published in Nonlinearity (2021) for the compactness of the density when the temperature is given.
\end{abstract}

\maketitle

\noindent{\bf Keywords:}  Compressible Navier-Stokes, Heat-conduction,
Truncated Virial pressure law, Non-monotone pressure,  Vacuum state, 
Global weak solutions.

\medskip

\noindent{\bf AMS Classification numbers:}  76N10, 35D30, 35Q30, 35Q86

\section{Introduction and main result}
\smallskip
This paper is dedicated to Anton\'{\i}n Novotn\'y who had contributed so many innovative work to the theory of compressible fluids, specifically to compressible Navier-Stokes-Fourier equations, and unfortunately  passed away suddenly on Thursday, June 03 2021. 

The non-stationary Navier-Stokes-Fourier equations modeling viscous compressible and heat conducting fluids, in the multi-dimensional in space case, have been extensively studied both from a theoretical and a numerical point of view: see \cite{FeKaPo}. Yet many questions around the existence, uniqueness, or stability of solutions have remained unsolved. The case of non-stationary barotropic Navier-Stokes equations (namely without temperature) is somewhat better understood, in particular for the global existence of weak solutions \`a la Leray (\cite{Le}): see for instance  \cite{Li}, \cite{Fe},  \cite{FeNoPe}, \cite{Fe0}, \cite{PlWe},  \cite{BreJab18}, \cite{BrJaWa1} and references cited therein. The present study addresses the theoretical problem of existence of so-called global weak solutions \`a la Leray for the full system including the evolution of internal energy (temperature dependent case) for the so-called virial pressure law.

One of the well-known difficulty of such nonlinear system of fluid mechanics with heat-conductivity is that the {\it a priori} bounds based on the energy estimates are not strong enough to get equi-integrability of certain quantities, such as the viscous dissipation quantity (see for instance \cite{Li}). This is compounded in the present paper by a pressure law that is non-monotone in the density and hence thermodynamically unstable.

A first helpful approach is to  replace the internal energy equation by the entropy inequality supplemented by the total energy balance, as introduced by E. Feireisl and A. Novotny, explained in \cite{Febook}, \cite{FeNo} with appropriate hypothesis on the pressure state laws. Unfortunately, this approach was initially limited to thermodynamically stable state laws, namely 
\[
\partial_\rho P\vert_{\vartheta}  > 0, \qquad \qquad \partial_\vartheta e\vert_{\rho} > 0,
\]
where $P$ is the pressure state law and $e$ is the internal energy depending on the density $\rho$ and the temperature $\vartheta$.

On the other hand, the potential oscillations in the density due to the pressure laws can in principle be controlled through the method in \cite{BreJab18, BrJaWa1}. But a major difficulty further lies in combining both approaches at the level of  an approximate system. We take a different point of view to bypass most this issue by constructing solutions through a fixed point argument.

\smallskip

Define, in a periodic domain $\Omega = \TT^d$ for $d\ge2$, the so-called truncated virial pressure law 
\begin{equation}\label{virial}
P(\rho,\vartheta)= \rho^\gamma + \vartheta\,\sum_{n=0}^N B_n(\vartheta)\, \rho^n 
\end{equation} 
where $\gamma>\max(4, 2N, d)$. The virial equation of state seems to have been proposed first by M.~Thiesen in 1885 and intensively studied by H. Kammerlingh Onnes (see \cite{Ow}) at the beginning of the previous century as an empirical extension of the ideal-gas law. The reader interested by Virial coefficients of pure gases and mixtures is referred to \cite{DyWi}.

Such  pressure laws is not monotone with respect to the density even after a fixed value and therefore is not thermodynamically stable. They are nevertheless commonly used in practice. With proper assumptions on the coefficients $B_n(\vartheta)$, one can still ensure that  $\partial_\vartheta e\vert_{\rho} > 0$ so that the system is at least thermodynamically consistent. 

We next consider the compressible Navier--Stokes--Fourier (CNSF) equations for the corresponding state laws,
\begin{align} 
&\partial_{t}\rho+\div(\rho u) =0, \label{eq:00}\\
&\partial_t (\rho u) + \div(\rho u\otimes u)  - \div\mathcal S
+ \nabla P = 0, \label{eq:01}\\
&\partial_t(\rho E) + \div(\rho u E) + \div(Pu) = \div(\mathcal Su) + \div (\kappa \nabla \vartheta)
\label{Energy}
\end{align}
where $E =|u|^2/2 + e$ is the energy with $P = P(\rho, \vartheta)$ and $e = e(\rho, \vartheta)$ respectively stand for the pressure and the (specific) internal energy.

The initial condition are given by
  \begin{equation}
  \label{eq:ini}
  \rho\vert_{t=0} =\rho_0 \ \ \ \ \ (\rho u)\vert_{t=0} =m_0\ \ \ \ \ (\rho E)\vert_{t=0} =\rho_0E_0.
  \end{equation}  
Note that the above initial conditions determine the corresponding value at $t=0$ of the temperature $\vartheta|_{t=0} = \vartheta_0$, provided that $\partial_\vartheta e>0$. 

For simplicity, we take the isotropic stress tensor
\begin{equation}\label{viscous}
	\mathcal{S} = \mu(\nabla u + \nabla u^T) + \lambda\div u\mbox{Id}
\end{equation} 
with $\mu$ and $\lambda$ two constants satisfying the physical constraint $\mu>0$ 
and $\lambda +2\mu/d> 0$. 
In order to be consistent with the second principle of Thermodynamics which implies
the existence of the entropy as a closed differential form in the energy balance, the
following compatibility condition, called ``Maxwell equation" between $P$ and $e$ has to be
satisfied
  \begin{equation}
  \label{max:equ}
  P = \rho^2\frac{\partial e}{\partial\rho} + \vartheta \frac{\partial P}{\partial\vartheta}.
  \end{equation}
  This allows to define the internal energy directly from the pressure law, up to a function only of $\vartheta$, which we take as $0$ for simplicity. Namely for any arbitrary $\rho_c>0$, and by using~\eqref{virial},
  \begin{equation}
    \begin{split}
      e(\rho,\vartheta)&=\int_{\rho_c}^\rho -\frac{1}{\rho^2}\,\left(P(\rho',\vartheta)-\vartheta\,\frac{\partial P}{\partial\vartheta}(\rho',\vartheta)\right)\\
      &=m+\frac{\rho^{\gamma-1}}{\gamma-1}-\sum_{n=0}^N \vartheta^2\,\frac{d}{d\vartheta}(B_n(\vartheta))\,\frac{\rho^n}{n-1}-\vartheta^2\,\frac{d}{d\vartheta}(B_1(\vartheta))\,\log \rho+\vartheta^2\,\frac{d}{d\vartheta}(B_1(\vartheta))\,\frac{1}{\rho}.
\end{split}
\label{statelaw}
  \end{equation}
The specific entropy $s=s(\rho, \vartheta)$ is now also defined up to an additive constant by
  \begin{equation}\label{entrop}
  \frac{\partial s}{\partial \vartheta}\bigg|_{\rho}=\frac{1}{\vartheta}\frac{\partial e}{\partial\vartheta}\bigg|_{\rho}\ \ 
  \ \ \ and \ \ \ \ \ 
  \frac{\partial s}{\partial \rho}\bigg|_{\vartheta}=-\frac{1}{\rho^2}\frac{\partial P}{\partial\vartheta}\bigg|_{\rho}.
  \end{equation}
 If $(\rho,\ \vartheta)$ are smooth and bounded from below away from zero and if the velocity field is smooth, then the total energy balance can formally be replaced by the thermal energy balance
\begin{equation*}
	C_v\rho(\partial_{t}\vartheta + u\cdot\nabla \vartheta) - \div(\kappa(\vartheta)\nabla\vartheta) = \SS:\nabla u - \vartheta \frac{\partial P(\rho, \vartheta)}{\partial \vartheta}\div u
\end{equation*}
where $\cs:\nabla u = \Tr(\cs\nabla u)$.

Furthermore, dividing by $\vartheta$, we arrive at the entropy equation
\begin{equation} \label{ENTROPY}
	\partial_t(\rho s) + \div (\rho s u) - \div\left(\frac{\kappa(\vartheta)\nabla\vartheta}{\vartheta}\right) = \frac{1}{\vartheta}\left(\SS:\nabla u + \frac{\kappa|\nabla\vartheta|^2}{\vartheta}\right).
\end{equation}
We will use both of the two equations~\eqref{Energy} and~\eqref{ENTROPY} involving temperature, at different parts of our argument, together with a third technical formulation derived from~\eqref{Energy}.

We emphasize that, {\em a priori}, the system~\eqref{eq:00}-\eqref{Energy} conserves 
 the total mass 
$$ \int_{\TT^d} \rho (t,\cdot)\, dx = \int_{\TT^d} \rho_0 \, dx.
$$
The total energy of the system, which is the sum of the kinetic and the potential energies, reads
  \begin{equation*}
    \ce(\rho,\vartheta,m) = 
    \int_{\TT^{d}} \rho E(\rho,\vartheta, \rho u) \, dx =
     \int_{\TT^{d}} \left(\frac{|m|^2}{2\rho } + \rho e(\rho, \vartheta)\right)\,dx 
  \end{equation*}
and is also conserved, namely,
$$ \ce(\rho,\vartheta,m)(t)  =   \ce(\rho_0,\vartheta_0,m_0),$$
with $m=\rho u$, where $e$ is obtained from equation~\eqref{max:equ}.

\bigskip

We need several precise assumptions on the various coefficients entering into equations~\eqref{eq:00}-\eqref{Energy} which we now make explicit.

\smallskip

\noindent {\bf Assumption on the conductivity $\kappa(\vartheta)$:}
\begin{align}
	\label{con:ass}
	\kappa_1(\vartheta^{\alpha}+1) \le \kappa(\vartheta) \le  \kappa_2(\vartheta^{\alpha}+1), \ \ \ \ \ \ \ \ \ \  \kappa_3\vartheta^{\alpha-1} \le \kappa'(\vartheta) \le  \kappa_4\vartheta^{\alpha-1}
\end{align}
where $\kappa_1, \kappa_2>0,$ and $\alpha\ge 4$.

\smallskip
\noindent {\bf Assumptions on the pressure law $P$.}
\begin{itemize}
	\item[(1)]
The pressure $P$ given by \eqref{virial} contains a radiative part, namely 
\begin{equation} \label{P2}   
\partial_\vartheta^2B_0 >0\ \hbox{for}\ \vartheta=0;
\end{equation} 
\item[(2)] For $2\le\gamma_{\vartheta}\le \alpha/2$ with some $\alpha\ge 4$
 \begin{equation}\label{P3}
  C^{-1}\vartheta^{\gamma_{\vartheta}-1} \le B_0(\vartheta) \le \vartheta^{\gamma_{\vartheta}-1} , \ \ \ \ 
  C^{-1}\vartheta^{\gamma_{\vartheta}-2} \le B'_0(\vartheta) \le \vartheta^{\gamma_{\vartheta}-2};
  \end{equation}
\item[(3)] We assume 
\begin{equation} \label{P4}
B_1\equiv C_1\hbox{ for some } C_1\in\RR;
\end{equation}
\item[(4)] For $n\ge2$, the coefficients $B_n$ is concave in the sense that
\begin{equation} \label{P5} 
  \frac{d}{d\vartheta}(\vartheta^2B'_n) \le 0;
  \end{equation}
\item[(5)]  We also assume that the following is true for $n\ge0$ and $n\neq1$
\begin{equation}
\label{P6} 
 |\vartheta^3 B'''_n(\vartheta)| + |\vartheta^2 B''_n(\vartheta)|+ |\vartheta B'_n(\vartheta)| + |B_n(\vartheta)| \le C \vartheta^{(\gamma-n)\gamma_{\vartheta}/\gamma-1-\epsilon};
\end{equation}
\item[(6)]There exist some constants $\bar B_n$ and $\bar\alpha< \min(\alpha,2\,\gamma_\vartheta)$,
\begin{equation}
		\label{P6bis}
		|\vartheta^2\, B'_n(\vartheta)| + \vartheta|B_n(\vartheta)| + |\vartheta B_n(\vartheta)-\bar B_{n}|  \le  C\,\vartheta^{\bar\alpha(\gamma-2n)/2\gamma-\epsilon};
\end{equation}
\item[(7)]  Finally we also assume the following property on the entropy $s$
\begin{equation} \label{P7} 
\hbox{ The specific entropy } s \hbox{ is a concave function of } (\rho^{-1}, e).
\end{equation}
\end{itemize}

\begin{remark}
    The above assumption on $s$ ensures that the $C_v$ coefficient is non-negative
    \begin{equation}
    	C_v=\frac{\partial e}{\partial\vartheta}\bigg|_\rho = -\frac{1}{\vartheta^2}\frac{\partial^2 s}{\partial e^2}\bigg|_\rho^{-1} \ge 0
    \end{equation}
where the second equality comes from that $\partial s/\partial e = \vartheta^{-1}$. 
\end{remark}

\begin{remark} Let us comment that the results described for in instance in \cite{FeNo}
are based on a radiative part and a cold pressure part. In the truncated pressure law, 
this corresponds respectively to the terms $\vartheta B_0(\vartheta)$ and 
$\rho^\gamma$. 
\end{remark}

We emphasize that none of the assumptions above require a sign on $B_n(\vartheta)$, except on $B_0$. Hence as claimed, the truncated virial pressure may not be monotone in $\rho$ for some values of $\vartheta$ or $\rho$.

\smallskip

We are now ready to state our main result.
\begin{theorem}
\label{main}
Assume the initial data $\vartheta_0$, $m_0$ and $\rho_0\ge 0$ with $\int_{{\mathbb T}^d} \rho_0 = M_0>0$ satisfy
$$\ce(\rho_0, \vartheta_0, m_0) = \int_{\TT^{d}} \left(\frac{|m_0|^2}{2\rho_0 } + \rho_0 e(\rho_0, \vartheta_0) \right)\,dx <\infty$$
where $m_0=0$ when $\rho_0=0$. Suppose that the pressure state law $P(\rho,\vartheta)$ is given by \eqref{virial} with the assumptions \eqref{P2}--\eqref{P6bis} and assume  \eqref{P7} on the entropy.
 Then there exists a global weak solution $(\rho,u,\vartheta)$ to Compressible Navier--Stokes--Fourier System. More precisely it satisfies \eqref{eq:00}--\eqref{eq:01} with \eqref{viscous} in the distribution sense,
 the following entropy inequality
\begin{equation}
  \partial_t (\rho s) + {\rm div}(\rho s u) 
    - {\rm div} \bigl( \frac{\kappa(\vartheta) \nabla\vartheta}{\vartheta}\bigr) \ge
     \frac{1}{\vartheta} \bigl( S:\nabla u + \frac{\kappa(\vartheta) |\nabla\vartheta|^2 }{\vartheta}\bigr) \label{entropyineq}
\end{equation}
where $s$ is defined by \eqref{entrop} and the energy inequality
\[\int_{\TT^d}\bigl( \rho \frac{[u|^2}{2} + \rho e(\rho,\vartheta)\bigr)(t) \, dx 
  \le \int_{\TT^d}\bigl(\frac{|m_0|^2}{2\rho_0} + \rho_0 e(\rho_0,\vartheta_0)\bigr) \, dx.
    \]
Moreover, we have
$$ u \in L^2(0,T;H^1(\TT^d)), \qquad |m|^2/2\rho \in L^\infty(0,T;L^1(\TT^d))$$
$$ \rho u \in  {\mathcal C}([0,T], L^{2\gamma/(\gamma+2)} (\TT^d) \hbox{ weak }),$$
for any $T>0$, the weak regularity 
$$ \rho \in {\mathcal C}([0,T], L^\gamma(\TT^d) \hbox{ weak }) \cap L^{\gamma+a} ((0,T)\times \TT^d)
    \hbox{ where } 0 < a < 1/d$$
$$\vartheta \in L^\alpha (0,T; L^{\alpha/(1-2/d)} (\TT^d)), \qquad
 \log \vartheta \in L^2(0,T; H^1(\TT^d)),
$$
and the initial conditions satisfied by $(\rho,\rho u, \rho s)$ in a weak sense
$$\rho\vert_{t=0} = \rho_0, \qquad \rho u\vert_{t=0} = m_0, \qquad \rho s\vert_{t=0^+} \ge \rho_0
     s(\rho_0, \vartheta_0).$$
\end{theorem}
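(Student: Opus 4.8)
The plan is to obtain $(\rho,u,\vartheta)$ as the limit of a hierarchy of approximate problems whose innermost layer is solved by a fixed point that \emph{decouples} the continuity--momentum block from the thermal block. At the approximate level I would add the usual regularisations: an artificial pressure $\delta\rho^\beta$ with $\beta$ large, an artificial viscosity $\eps\Delta\rho$ in \eqref{eq:00} in the spirit of Feireisl--Novotn\'y, and a regularisation/truncation of \eqref{Energy} together with mollified, strictly positive data $\rho_0^\eps$, $\vartheta_0^\eps$. Given an iterate $\vartheta^k$, the first sub-problem is to solve
\[
\partial_t\rho^{k+1}+\div(\rho^{k+1}u^{k+1})=\eps\Delta\rho^{k+1},
\]
\[
\partial_t(\rho^{k+1}u^{k+1})+\div(\rho^{k+1}u^{k+1}\otimes u^{k+1})-\div\cs^{k+1}+\nabla P(\rho^{k+1},\vartheta^k)+\delta\nabla(\rho^{k+1})^\beta=0,
\]
that is, a barotropic Navier--Stokes system with the \emph{frozen}, hence admissible, pressure $\rho\mapsto P(\rho,\vartheta^k)$ --- this is precisely the setting of the authors' Nonlinearity (2021) result, which provides existence and, crucially, compactness of $\rho^{k+1}$ in spite of the non-monotonicity. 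The second sub-problem is the thermal energy balance for $\vartheta^{k+1}$, made linear parabolic by evaluating the nonlinear coefficients at the previous iterate, with source $\cs^{k+1}:\nabla u^{k+1}-\vartheta\,\partial_\vartheta P(\rho^{k+1},\vartheta^k)\,\div u^{k+1}$, solved with a maximum principle guaranteeing positivity. A Schauder fixed point in $\vartheta$ closes the loop at fixed $(\eps,\delta)$; one then sends $\eps\to0$ and finally $\delta\to0$.

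The a priori estimates are the standard thermodynamic ones, and the assumptions \eqref{P2}--\eqref{P6bis} are tuned exactly so that they survive every limit. Conservation, respectively dissipation, of the total energy $\ce$ bounds $\rho|u|^2$ and $\rho e$ in $L^\infty(0,T;L^1)$, which through \eqref{statelaw} controls $\rho^\gamma$ together with all the $\vartheta$-dependent internal-energy contributions. The entropy inequality \eqref{ENTROPY}, \eqref{entrop} yields $\nabla u\in L^2(0,T;L^2)$, $\nabla\vartheta^{\alpha/2}\in L^2$ and $\nabla\log\vartheta\in L^2$, hence by interpolation and Sobolev embedding $\vartheta\in L^\alpha(0,T;L^{\alpha/(1-2/d)})$ and $\log\vartheta\in L^2(0,T;H^1)$, using $\kappa(\vartheta)\ge\kappa_1(\vartheta^\alpha+1)$ from \eqref{con:ass}. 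Testing the momentum equation against a Bogovskii corrector $\nabla\Delta^{-1}(\rho^{a})$ with $0<a<1/d$ upgrades the pressure integrability to $\rho\in L^{\gamma+a}((0,T)\times\TT^d)$; that $\gamma>\max(4,2N,d)$ and the growth bounds \eqref{P6}, \eqref{P6bis} are what make the mixed $\vartheta B_n(\vartheta)\rho^n$ terms absorbable here.

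The compactness step splits into three parts. For $\vartheta$: from $\nabla\log\vartheta\in L^2$, the integrability above, and the temperature equation one obtains, via an Aubin--Lions argument applied to a renormalised quantity, strong convergence of $\vartheta_n$ (and of $\rho_n\vartheta_n$) almost everywhere and in suitable $L^p$. For $u$: weak convergence in $L^2(0,T;H^1)$ together with $\rho_nu_n\to\rho u$ in $\mathcal C([0,T];L^{2\gamma/(\gamma+2)}\text{ weak})$ and $\rho_nu_n\otimes u_n\to\rho u\otimes u$ from the momentum equation. For $\rho$, the heart of the matter: one runs the authors' weighted argument --- build weights $w$ and $\bar w$ from a truncated, mollified density through a transport equation, form the defect functional $\intint w\,(\rho_n-\bar\rho)(\cdots)$, and prove it decays, which forces $\rho_n\to\rho$ almost everywhere. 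The temperature dependence of $P$ enters this estimate only through commutator terms of the schematic form $\intint|\vartheta_n-\vartheta|\,(\text{density factors})$, controlled by the strong convergence of $\vartheta_n$ already secured. With $\rho_n\to\rho$ and $\vartheta_n\to\vartheta$ a.e.\ and the equi-integrability furnished by \eqref{P6bis}, one passes to the limit in $P(\rho_n,\vartheta_n)$ and in $\rho_ne(\rho_n,\vartheta_n)$, $\rho_ns(\rho_n,\vartheta_n)$; the energy inequality and the entropy inequality \eqref{entropyineq} then follow from weak lower semicontinuity of the relevant convex functionals --- and concavity of $s$ as a function of $(\rho^{-1},e)$, assumption \eqref{P7}, is exactly what supplies that convexity.

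The main obstacle is the structural compatibility of the two blocks. The density-compactness machinery was built for a \emph{given} temperature, whereas here $\vartheta$ is itself only weakly compact a priori and feeds into $P$ through the $B_n(\vartheta)\rho^n$ terms and into $e$ through \eqref{statelaw}; making the iteration consistent means guaranteeing that at each stage the frozen $\vartheta^k$ is regular enough to invoke that result, that the resulting $(\rho^{k+1},u^{k+1})$ produce a solvable parabolic problem for $\vartheta^{k+1}$ with bounds \emph{uniform in $k$} and in $(\eps,\delta)$, and that these are precisely the bounds preserved by the weighted defect functional --- a delicate bookkeeping that is the real content of the construction. A secondary obstacle is the degeneracy of the thermal equation on the vacuum set $\{\rho=0\}$: there one must rely on the total-energy formulation \eqref{Energy} and the entropy inequality rather than on the thermal equation, and reconcile the two; and keeping $\vartheta$ bounded away from $0$ at the approximate level, so that one may legitimately divide by $\vartheta$, without assuming any such bound in the limit, is where the radiative term with $\partial_\vartheta^2B_0>0$ at $\vartheta=0$ (assumptions \eqref{P2}, \eqref{P3}) and the lower bound on $\kappa$ with $\alpha\ge4$ are essential.
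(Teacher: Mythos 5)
Your overall architecture --- decouple the continuity--momentum block from the thermal block, freeze $\vartheta$ and invoke the heterogeneous-pressure barotropic result, then close a fixed point on $\vartheta$ --- is the same as the paper's. But there are two places where your plan either diverges in a way that would not work, or leaves the genuinely hard point unaddressed.

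First, the thermal block. You propose to make the thermal energy balance ``linear parabolic by evaluating the nonlinear coefficients at the previous iterate,'' with coefficient $C_v\rho$ and diffusion $\div(\kappa(\vartheta^k)\nabla\vartheta^{k+1})$. This does not survive near vacuum and near $\vartheta=0$: the leading-order coefficient $C_v\rho$ degenerates where $\rho\to 0$, and $\kappa(\vartheta^k)$ is not bounded below in a usable way if $\vartheta^k$ vanishes, so the frozen-coefficient linear problem is not uniformly parabolic and no maximum principle or $L^1$ well-posedness is available with the rough source $\cs:\nabla u-\vartheta\partial_\vartheta P\,\div u$. The paper avoids this entirely by changing the unknown to $g=\rho\,\tilde e$, where $\tilde e$ is the internal energy \emph{without} the barotropic part, and treating the resulting equation \eqref{equ:g} as a genuinely quasi-linear equation on $g$ in divergence form; the only regularisation is an $\eps$-modification of the $g\leftrightarrow\vartheta$ relation \eqref{gtothetaeps}, which both removes the degeneracy at $\vartheta=0$ and keeps the parabolic structure non-degenerate via \eqref{bd:g:the}--\eqref{lbd:the:g}. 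Existence then comes from Ladyzhenskaya--Solonnikov--Uraltceva for quasi-linear equations, not by linearisation. Relatedly, the source must be $\tilde P\,\div u$ with $\tilde P=\vartheta\sum_n B_n(\vartheta)\rho^n$ --- the $\rho^\gamma$ part must be kept \emph{out} of the thermal block, otherwise the energy contributions of the two blocks double-count $\rho^\gamma$ and never recombine into the total energy inequality. The artificial pressure $\delta\rho^\beta$ and artificial viscosity $\eps\Delta\rho$ you add are neither used nor needed once \cite{BrJaWa1} is invoked as a black box.

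Second, and more fundamentally, the fixed point is where you wave your hands at exactly the paper's central difficulty. You write that making the iteration consistent ``is the real content of the construction,'' which is correct, but you give no mechanism. The issue is that before the fixed point is reached, the energy estimate from Theorem~\ref{existrhou} is stated in terms of the \emph{input} temperature $\vartheta$ while the energy equality \eqref{energyequality} from Theorem~\ref{solvevartheta} is for the \emph{output} $L(\vartheta)$, and these do not add up to a closed bound for an arbitrary $\vartheta$. The paper needs the Leray--Schauder form of the theorem (not plain Schauder) and must verify that $\vartheta=\ell L(\vartheta)$ with $\ell\in[0,1]$ implies an a priori bound; this is done with the auxiliary functional $\phi_\vartheta(t)$ in \eqref{phi:v}, whose time-continuity for fixed $\vartheta$ allows a bootstrap: one shows \eqref{criticalvartheta}, notes that $\beta\bar\alpha/\gamma_\vartheta$ may exceed~$1$, and therefore chooses $T$ small and argues by continuity that $t_0=T$, then iterates in time to get global existence. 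Nothing in your sketch supplies this; ``delicate bookkeeping'' is the theorem, not a remark.

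A lesser gap: you state the limiting energy and entropy inequalities follow from ``weak lower semicontinuity of the relevant convex functionals,'' and credit assumption~\eqref{P7} for the convexity. In fact the paper obtains the entropy \emph{inequality} \eqref{entropyineq} because the $\eps\to0$ limit cannot pass in the equality for $g$ (the terms $\frac1{\vartheta_\eps}\cs_\eps:\nabla u_\eps$ and $\kappa(\vartheta_\eps)|\nabla\vartheta_\eps|^2/\vartheta_\eps^2$ are only lower-semicontinuous via joint convexity of $a^2/b$, not via~\eqref{P7}), and the complementary energy \emph{inequality} is recovered by adding \eqref{energy0} to \eqref{energyequality} and then removing the residual $\bar B_n\rho^n$ terms through renormalised solutions of the continuity equation (DiPerna--Lions). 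Assumption~\eqref{P7} is used elsewhere (to justify the one-to-one correspondence $g\leftrightarrow\vartheta$ and the pointwise-convergence argument at the end of the fixed-point compactness step), not as the source of the lower semicontinuity you describe.
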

We remark that we use the notation $\rho\in {\mathcal C}([0,T], L^\gamma(\TT^d) \hbox{ weak })$ to mean that $\rho$ is weakly continuous in time in $L^\gamma$: for any $t_n\to t$, we have that $\rho(t_n,.)\to \rho(t,.)$ for the weak topology of $L^\gamma(\RR^d)$.

\smallskip

Theorem~\ref{main} is the first result providing global existence of weak solutions for the heat conducting Navier-Stokes equations with a  thermodynamical unstable pressure law depending on the density and the temperature.

The main idea in the proof is to separate the density and momentum equations~\eqref{eq:00}-\eqref{eq:01} from the energy equation~\eqref{Energy}. For a given $\vartheta(t,x)$ satisfying appropriate energy bounds, our assumptions on the pressure law let us use~\cite{BrJaWa1} (see also the introductory paper~\cite{BrJa}) more or less directly. This article focused on the barotropic system, namely~\eqref{eq:00}-\eqref{eq:01}, but with pressure laws that are inhomogeneous in time and space. It is thus a good tool for the task of obtaining existence of $\rho$ and $u$ for a given $\vartheta$. 

We also need to obtain existence of some $\vartheta$ solving~\eqref{Energy} for a given $\rho$ and $u$, again with appropriate energy bounds. This does not seem to fit in any classical framework of non-linear parabolic equations and therefore requires careful approach. We use a different formulation, that is loosely based on~\eqref{Energy} (and formally equivalent when all quantities are smooth). We also need a proper approximated equation to resolve a potential degeneracy where $\vartheta$ is close to $0$. This finally allows us to obtain a global, weak solution to our variant formulation to~\eqref{Energy}. We do not have strong enough bounds to recover~\eqref{Energy} rigorously from that but it is enough to obtain an inequality in the entropy formulation~\eqref{entropyineq} together with the opposite inequality in the propagation of the total energy (as can be surmised from the formulation in Theorem~\ref{main}). 

The last step in the proof is obviously to conclude the fixed point argument, through the Leray-Schauder theorem. This is a rather short but very challenging step. The issue is that we cannot yet recover the usual energy estimate: Before we do obtain a fixed point, the piece of the energy that we obtain from the existence on~\eqref{eq:00}-\eqref{eq:01} does not fit with the piece of the energy that we obtain from~\eqref{Energy}. This is where the exact formulation of the Leray-Schauder theorem is critical and must be combined with the precise choice we have made of the decomposition.

\section{Previous result concerning the compressible  Navier-Stokes-Fourier system.}  In every previous work concerning the global existence of weak solutions, the viscous stress tensor is assumed to be isotropic
\[
{\mathcal  S}= \mu\,(\nabla u+\nabla u^T)+\lambda\,\div u\, {\rm Id},
\]
with coefficients $\mu,\;\lambda$ either constant or depending only on $\vartheta$.  Concerning the pressure state law, we can cite the two following assumptions:

\medskip

\noindent {1) \it The pressure law as a monotone perturbation of the barotropic case.} It is due  to {\sc E. Feireisl} who considered  pressure laws under the form
\[
P(\rho,\vartheta) = P_c(\rho) + \vartheta P_\vartheta(\rho),
\]
where
\begin{equation}\begin{split}
& P_c(0)=0, \quad P'_c(\rho) \ge a_1 \rho^{\gamma-1} - b \hbox{ for } \rho >0,\\
& P_c(\rho) \le a_2 \rho^\gamma + b \hbox{ for all } \rho \ge 0,\\
& P_\vartheta(0) = 0, \qquad P'_\vartheta(\rho) \ge 0 \hbox{ for all } \rho \ge 0,\\
& P_\vartheta(\rho) \le c(1+\rho^\beta),
\end{split}\label{pressureheat}
\end{equation}
and
\[
\gamma >d/2, \qquad \beta < \frac{\gamma}{2} \hbox{ for } d=2, \quad
    \beta=  \frac{\gamma}{3} \hbox{ for } d=3
\]
with constants $a_1>0$, $a_2$, $b$ and $P_c$, $P_\vartheta$ in
${\mathcal  C}[0,+\infty) \cap {\mathcal  C}^1(0,+\infty)$.
   In agreement with Maxwell law and the entropy definition, it implies the following form
on the internal energy 
\[
e(\rho,\vartheta) = \int_{ \rho_\star}^\rho \frac{P_c(s)}{s^2} ds 
                               + Q(\vartheta),
\]
where $Q'(\vartheta)= C_v(\vartheta)$ (specific heat at constant volume). The entropy is given by
\[
s(\rho,\vartheta) = \int_{ \rho_\star}^\vartheta \frac{C_v(s)}{s} ds -  H_\vartheta(\rho),
\]
where $H_\vartheta(\rho)$ is the thermal pressure potential given through 
$$\displaystyle H_\vartheta(\rho) = \int_{ \rho_\star}^\rho {P_\vartheta(s)}/{s^2} ds.$$
   The heat conductivity coefficient $\kappa$ is assumed to satisfy
  \[
\kappa_1(\vartheta^\alpha + 1)  \le \kappa(\vartheta) \le \kappa_2(\vartheta^\alpha+1) 
     \hbox{ for all } \vartheta \ge 0,
\]
 with constants $\kappa_1>0$ and $\alpha\ge 2$.
  The thermal energy $Q=Q(\vartheta) = \int_0^\vartheta C_v(z) dz$ has not yet been determined and is assumed to satisfy $\int_{z\in [0,+\infty)} C_v(z)>0$ and $C_v(\vartheta) \le c(1+\vartheta^{\alpha/2-1})$.
    Because the energy and pressure satisfy
$$\frac{\partial e(\rho,\vartheta)}{\partial\vartheta} >0, \qquad
   \frac{\partial P(\rho,\vartheta)}{\partial\rho} >0
$$
the estimate on $H_\vartheta$  gives a control on $\rho^\gamma$ in $L^\infty(0,T;L^1(\Omega))$
and through the entropy equation a control on $\vartheta$ in $L^2(0,T; L^6(\Omega))$ in dimension $3$ and in $L^2(0,T;L^p(\Omega))$ for all $p<+\infty$ in dimension $2$.

\medskip

Because the entropy estimates does not provide an $H^1_x$ bound on $u$, {\sc E. Feireisl}  
combines it with a direct energy estimate (see below).
   Therefore one obtains the exact equivalent of estimates as in the barotropic case
\begin{equation}\begin{split}
&\sup_t \int_\Omega \rho\,|u|^2\,dx\leq C+{\mathcal E} (\rho_0,u_0),\\
& \sup_t \int_\Omega \rho^\gamma\,dx\leq C,\\
&\int_0^T\int_\Omega |\nabla u|^2\,dx\leq C.
\end{split}\label{energyestimates}
\end{equation}
in  this temperature dependent case. Using such information, he may then prove the extra integrability
$$\int_0^T \int_\Omega \rho^{\gamma+a} \, dx dt \le C(T,\ce(\rho_0,u_0))
$$
for $0<a<\min (1/d, 2d/\gamma -1)$. We will give more details later-on for such estimate for
the truncated virial pressure law.


 \bigskip

 \noindent {2) \it  Self-similar pressure laws with large radiative contribution.} It is due to {\sc E. Feireisl} and {\sc A.~Novotny }  who consider   pressure laws exhibiting  both coercivity of type $\rho^\gamma$ and $\vartheta^4$ for large densities and temperatures namely 
\[
P(\rho,\vartheta) = \vartheta^{\gamma/(\gamma-1)} Q(\frac{\rho}{\vartheta^{1/(\gamma-1)}}) +\frac{a}{3}  \vartheta^4 \hbox{ with }a>0, \quad \gamma >3/2,
\]
with 
\[
Q\in {\mathcal  C}^1([0,+\infty)), \qquad Q(0)=0, \qquad Q'(Z) >0 \hbox{ for all } Z\ge 0,
\]
and
\[
\lim_{Z\to +\infty} \frac{Q(Z)}{Z^\gamma} = Q_\infty >0.
\]
In agreement to Maxwell law and the definition of entropy, it implies the following
form on the internal energy and the entropy
\[
e(\rho,\vartheta) = \frac{1}{(\gamma-1)} \frac{\vartheta^{\gamma/(\gamma-1)}}{\rho}
     Q(\frac{\rho}{\vartheta^{1/(\gamma-1)}}) + a \frac{\vartheta^4}{\rho},
\]
and 
\[
s(\rho,\vartheta) = S\left(\frac{\rho}{\vartheta^{1/(\gamma-1)}}\right) + \frac{4a}{3}\frac{\vartheta^3}{\rho}.
\]
They
impose
\[
0 < -S'(Z) =\frac{1}{\gamma-1} \frac{\gamma Q(Z) - Q'(Z)Z}{Z} < c  < +\infty \hbox{ for all } Z>0,
\]
with $\lim_{Z\to +\infty} S(Z) = 0$ so that thermodynamical stability holds.
Therefore the energy provides uniform bounds in $L^\infty_t L^1_x$ for $\vartheta^4$ and 
 $\rho^\gamma$.
   One assumes in this case that the viscosities and heat conductivity satisfy
 \[
\mu, \lambda \in {\mathcal  C}^1([0,+\infty)) \hbox{ are Lipschitz with } \quad
     \mu\, (1+\vartheta) \le \mu(\vartheta), \quad 0\le \lambda(\vartheta), \quad  \mu_0>0,
 \]
and
\[
\kappa \in {\mathcal  C}^1([0,+\infty), \qquad 
    \kappa_0 (1+\vartheta^3) \le \kappa(\vartheta) \le \kappa_1 (1+\vartheta^3), 
    \qquad 0<\kappa_0\le \kappa_1.
 \]
Almost everywhere convergence of the temperature is obtained using
the radiation term. Extra integrability on $P(\rho,\vartheta)$ can be derived just as in the  barotropic case. 
Finally the same procedure as in the barotropic case is followed to have compactness on the density, relying heavily on the monotonicity of the pressure ${\partial P(\rho,\vartheta)}/{\partial\rho}>0$. This gives global existence of weak solutions (in a the same sense that we precise later). Remark the term 
$a\vartheta^4/3$ in the pressure law can help to get compactness in space and time using 
commutation between strictly convex function and weak convergence.

\bigskip

With respect to these previous works, we focus here, as in the barotropic case, in removing the assumption of monotonicity on the pressure law with respect to the density, considering the truncated virial pressure law on which we do not want to assume too restrictive assumptions, namely, a pressure law \eqref{virial} with the assumptions \eqref{P2}--\eqref{P7}.

%
\section{The direct entropy estimate}
\subsection{A formal entropy bound}
We explain here the general framework for our result on the Navier--Stokes--Fourier system. The estimates here closely follow the ones pioneered  by {\sc P.--L. Lions}, and {\sc E. Feireisl} and {\sc A. Novotny}. With respect to the previous discussion, we only present them here in a more general context as in particular we will not need the monotonicity of $P$.  

If one removes the monotonicity assumption on $P$ then thermodynamic stability does not hold anymore. 
Following {\sc  P.--L. Lions}, it is however possible to obtain the entropy dissipation 
 estimate directly by integrating the entropy equation
\[
\int_0^t\int_\Omega \Bigl(\mu\,\frac{|\nabla u|^2}{\vartheta} 
     +  \kappa{(\vartheta)}\frac{ |\nabla\vartheta|^2}{\vartheta^2}\Bigr)\,dx\,dt \leq C\,\int_\Omega \rho\,s(t,x)\,dx.
\]
Therefore the entropy bound dissipation holds under the general assumption that there exists $C$ s.t.
\begin{equation}
s(\rho,\vartheta)\leq C\,e(\rho,\vartheta)+\frac{C}{\rho}.\label{boundentropyenergy}
\end{equation}
Recall that
\[
e=m(\vartheta)+\int_{\rho^\star}^\rho \frac{(P(\rho',\vartheta) - \vartheta\partial_{\vartheta}P(\rho',\vartheta))}{\rho'^2}\,d\rho',
\]
and
\[
\partial_\rho s= 
-  \frac{\partial_\vartheta P}{\rho^2}.
\]
We also have that $\partial_\vartheta s =\partial_\vartheta e / \vartheta$, therefore as long as $m(\vartheta)\geq 0$ with 
\[
\int_{\vartheta^*}^\vartheta \frac{m'(s)}{s}\,ds \le C(1+m(\vartheta)),
\]
and
\[
 -\int_{\rho^*}^\rho \frac{\partial_\vartheta P}{\rho'^2}\,d\rho' \le
  C+C\,\int_{\rho^*}^\rho \frac{P -\vartheta \partial_\vartheta P}{\rho'^2}\,d\rho',
\]
for some $C>0$  then \eqref{boundentropyenergy} is automatically satisfied  
and one obtains the entropy bound dissipation.
Moreover if $e(\vartheta,\rho)\geq \rho^{\gamma-1}/C$ then one also has that 
$\rho\in L^\infty_t\,L^\gamma_x$.

  Assuming now that
\[
\kappa_1\,(\vartheta^\alpha+1)\leq \kappa(\vartheta)\leq \kappa_2\,(\vartheta^\alpha+1),
\]
with $\alpha \ge 2$, one deduces from the entropy estimate that
\[
\int_0^T\int_{\TT^d} (\vartheta^\alpha+1)\,|\nabla\vartheta|\,dxdt<\infty,
\]
showing that $\log \vartheta\in L^2_t H^1_x$ and $\vartheta^{\alpha/2}\in L^2_t\,H^1_x$ or 
by Sobolev embedding $\vartheta\in L^{\alpha}_t\,L^{\alpha/(1-2/d)}_x$ for $d\geq 3$.

   By a H\"older estimate, it is also possible to obtain a Sobolev-like, $L^{p_1}_t W^{1,p_2}_x$, bound on $u$
\[\begin{split}
&\int_0^T\left(\int_\Omega |\nabla u|^{p_2}\,dx\right)^{p_1/p_2}\,dt
\leq \left(\int_0^T\int_\Omega \frac{|\nabla u|^2}{\vartheta}\,dx\,dt\right)^{p_1/2}\\
&\qquad\times\left(\int_0^T\left(\int_\Omega \vartheta^{p_2/(2-p_2)}\,dx\right)^{p_1(2-p_2)/(p_2(2-p_1))}dt\right)^{(2-p_1)/2}<\infty,
\end{split}\] 
provided that $p_2/(2-p_2)=\alpha/(1-2/d)$ and $p_1/(2-p_1)=\alpha$ or
\begin{equation}
p_1=\frac{2\,\alpha}{1+\alpha},\quad p_2=\frac{2\,\alpha\,d}{d\,(\alpha+1)-2}.\label{sobolevexputemp}
\end{equation}
Unfortunately this Sobolev estimate does not allow to derive the gain of integrability on the density 
as usually. Actually one requires a $L^2_t H^1_x$ estimate on $u$ (the critical point is in fact the $L^2_t$ with value in some Sobolev in $x$).
   Instead one can easily extend the argument by {\sc E. Feireisl} and {\sc A. Novotny}: For any $\phi(\rho)$, one can write
\[
\frac{1}{2}\frac{d}{dt} \int_\Omega \rho |u|^2 + \frac{d}{dt} \int _\Omega\phi(\rho)
    + \int_\Omega {\mathcal  S}:\nabla u 
    = \int_\Omega (P(\vartheta,\rho)-\phi'(\rho)\,\rho+\phi(\rho))\,{\rm div u}.
\]
This leads to the assumption that there exists some $\phi$ s.t.
\begin{equation}
\begin{aligned}
& C^{-1} \rho^\gamma - C   \le \phi(\rho) \le C \rho^\gamma + C, \\
&  \left|P(\vartheta,\rho)-\phi'(\rho)\,\rho+\phi(\rho)\right|\leq C
\Bigl( \rho^{\beta_1}+\vartheta^{\beta_2}+\sqrt{\rho e(\vartheta,\rho)}\Bigr),
\label{pressureminusbarotrope}
\end{aligned}
\end{equation}
 with
\begin{equation}
\beta_1\leq \frac\gamma 2,\quad \beta_2\leq \frac\alpha 2. \label{assumptionbetas}
\end{equation}
Indeed, with \eqref{pressureminusbarotrope}, one has
\[
\int_0^T\int_\Omega {\mathcal  S}:\nabla u\,dx\,dt\leq {\mathcal E} (\rho_0,\vartheta_0,m_0)
    +C\,\int_0^T\int_\Omega \Bigl(\rho^{\beta_1}+\vartheta^{\beta_2}+\sqrt{\rho e(\vartheta,\rho)}\Bigr)\,|\div u|\,dx\,dt.
\] 
Using that ${\mathcal S}$ is Newtonian,  this leads to
\begin{equation}
\int_0^T\int_\Omega |\nabla u|^2\,dx\,dt\leq 
C\, {\mathcal E}(\rho_0,\vartheta_0,m_0)+C\,\|\nabla u\|_{L^2_{t,x}}\,\|\rho^{\beta_1}+\vartheta^{\beta_2}\|_{L^2_{t,x}},\label{H1withtemp}
\end{equation}
and the desired $H^1$ bound follows from \eqref{assumptionbetas}. It is now possible to follow the same steps to obtain an equivalent of  the extra integrability on the density if $\gamma>d/2$
\begin{equation}
\int_0^T \int_\Omega \rho^{\gamma+a}\,dx\,dt\leq C(T,{\mathcal E}(\rho_0,\vartheta_0, m_0)),\quad\mbox{for all}\ a< 1/d. \label{gainintegrabilitytemp}
\end{equation}
Note here that the assumptions \eqref{pressureminusbarotrope}-\eqref{assumptionbetas} are likely not optimal. They nevertheless already cover the truncated virial law we consider here.

\subsection{The assumptions on the pressure law to get the above estimates}
For convenience, we repeat here all the assumptions presented above and will show that
the truncated virial pressure laws satisfy them under the assumptions related to the coefficients
$B_n$. To derive the important estimates mentioned in the sections above, the pressure law $P(\rho,\vartheta)$ has to be a positive pressure law satisfying the following
properties: For some $C>0$ and $\gamma>d$
\begin{equation}
\left\{\begin{aligned}
&P(\rho,\vartheta) \hbox{ such that } -\int_{\rho^*}^\rho \frac{\partial_\vartheta P}{\rho'^2}\,d\rho' \le
  C+C\,\int_{\rho^*}^\rho \frac{P -\vartheta \partial_\vartheta P}{\rho'^2}\,d\rho' , \\
& e(\rho,\vartheta)= m(\vartheta) + \int_{\rho^\star}^\rho \frac{(P(\rho',\vartheta) -
     \vartheta \partial_\vartheta P(\rho',\vartheta))}{\rho'^2}\,d\rho'\geq 
      m(\vartheta) +
     \frac{\rho^{\gamma-1}}{C} +\frac{\vartheta^{\gamma_\vartheta}}{C\,\rho}.\\
&\quad   \hbox{ with }  m(\vartheta)\ge 0 \\
& \hskip1cm  \hbox{ and }  \int_{\vartheta^*}^\vartheta \frac{m'(s)}{s}\,ds \le C(1+m(\vartheta))\leq C\,(1+\vartheta^{\alpha\,(\gamma+a-1)/2(\gamma+a)}), \\
&\kappa_1\,(\vartheta^\alpha+1)\leq \kappa(\vartheta)\leq \kappa_2\,(\vartheta^\alpha+1),\quad \mu,\;\lambda \ \mbox{constant and } \alpha\ge 4,\\
& C^{-1} \rho^\gamma- C\leq \phi(\rho)\leq C\,\rho^\gamma + C, \\ 
&|P(\rho,\vartheta)-\phi'(\rho)\,\rho+\phi(\rho)|
  \leq C\,\rho^{\beta_1}+C\,\vartheta^{\beta_2}+C\,\sqrt{\rho e(\rho,\vartheta)},  \\
  & |\partial_\vartheta P(\rho,\vartheta)| \le C \rho^{\beta_3} + C \vartheta^{\beta_4}
\end{aligned}\right.\label{assumptionswithtemp}
\end{equation}
for
\begin{equation}\left\{\begin{aligned}
&\beta_1\leq \frac{\gamma}{2},\quad \beta_2< \frac{\alpha}{2},\\
& \beta_3 < \frac{\gamma+ a+1}{2}, \qquad \beta_4 < \frac{\alpha}{2},
\\
&\frac{2}{d}\,\mu+\lambda>0, \quad \gamma_{\vartheta} \ge 2\\
\end{aligned}\right.\label{coefftemp}
\end{equation}
where we recall that $a<\min\left(1/d, 2\gamma/d -1\right)=1/d$ since $\gamma>d$ here.
  We also assume that the specific heat is positive (as is necessary for the physics) {\em i.e.}
\begin{equation}
C_v=\partial_\vartheta e(\rho,\vartheta)>0,\quad \forall \rho,\;\vartheta,\label{specificheat>0}
\end{equation}
and that the pressure contains a radiative part
\begin{equation}
\partial_\vartheta^2 P(\rho=0,\vartheta)>0.\label{radiative}
\end{equation}
We do not need to impose monotony on $P$ and it is enough that
\begin{equation}
\begin{split}
&\bigl|\partial_\rho P(\rho,\vartheta)\bigr|\leq C\,\rho^{\gamma-1}+C\,\vartheta^{\gamma_\vartheta}
\qquad \hbox{ with }  2 \le  \gamma_\vartheta < {\alpha}/{2}.\\
\end{split}\label{nonmonotonewithtemp}
\end{equation}
Finally the initial data has to satisfy
\begin{equation}\label{initemp1}
\begin{aligned}
&\rho_0 \in L^\gamma(\Pi^d), \quad
\vartheta_0 \in L^{\gamma_{\vartheta}}(\Pi^d)  \\
& \hskip2cm \hbox{ with } \rho_0\ge 0,  \quad 
      \vartheta_0 >0 \hbox{ in } \Pi^d   \qquad \hbox{ and }
       \int_{\Pi^d} \rho_0 = M_0>0,  
      \end{aligned}
      \end{equation}
 and
 \begin{equation}\label{initemp2}
      {\mathcal E}_0 = \int_{\Pi^d} \Bigl(\frac{1}{2}\frac{|(\rho u)_0|^2}{\rho_0} + \rho_0 e(\rho_0,\vartheta_0)\Bigr) <+\infty.
\end{equation}

\bigskip

\subsection{The truncated virial pressure law satisfies the properties needed for the estimates above}
  The truncated pressure law mentioned in the introduction\[
P(\rho,\vartheta)=\rho^\gamma
                               +\vartheta\,\sum_{n=0}^{N} B_n(\vartheta)\,\rho^n 
\]
with $\gamma>2N\geq 4$ satisfy the assumptions described before. Choosing $m=constant$ for simplicity in this example, this leads to 
\[
e(\rho,\vartheta)=m+\frac{\rho^{\gamma-1}}{\gamma-1}
   -\sum_{n\ge 2}^N \vartheta^2\,B_n'(\vartheta)\,\frac{\rho^{n-1}}{n-1}
   - \vartheta^2 B'_1(\vartheta) \log \rho
   + \vartheta^2 B'_0(\vartheta) \frac{1}{\rho} ,
\]

\noindent For simplicity,  let us assume that $B_1= constant$, which is the normal virial assumption,  so that this term vanishes. 
The entropy reads
\[
s(\rho,\vartheta)=-\sum_{n\ge 2}^N (\vartheta\,B_n'(\vartheta)+B_n(\vartheta))\,\frac{\rho^{n-1}}{n-1}
  +B_1\,\log \rho + (\vartheta B_0'(\vartheta)+ B_0(\vartheta)) \frac{1}{\rho}.
\]

\medskip

\noindent {\rm 1)} We assume that the pressure contains a radiative part, namely that $B_0$ is convex in $\vartheta$ with $ C^{-1}\,\vartheta^{\gamma_\vartheta -1}\leq B_0(\vartheta) \leq \vartheta^{\gamma_\vartheta-1}$ and $ C^{-1}\,\vartheta^{\gamma_\vartheta-2}\leq B_0'(\vartheta) \leq \vartheta^{\gamma_\vartheta-2}$ where $2\le \gamma_\vartheta \le \alpha/2$. This already satisfies \eqref{radiative}. 

\medskip

\noindent {\rm 2)} For $n\geq 2$ , the coefficients $B_n$ can have any sign but we require a concavity assumption: 
\[
\frac{d}{d\vartheta}(\vartheta^2\,B_n') \leq 0.
\]
 This ensures, with the assumption on $B_0$, that the specific heat $C_v = \partial_\vartheta e (\rho, \vartheta)$ satisfies {\rm \eqref{specificheat>0}}. This is again a classical assumption for the virial.  Note that it would be enough to ask this concavity of some of them and moreover that this is automatically satisfied if $B_n\sim -\vartheta^\nu$, that is precisely for the coefficients contributing to the non-monotony of $P$ in $\rho$.

\medskip

\noindent {\rm 3)} We also require some specific bounds on the $B_n$ namely that there exist $\bar B_n$ and $\eps>0$ s.t.
\begin{eqnarray}
&& \displaystyle 
\nonumber |\vartheta\, B_n'(\vartheta )|+|B_n(\vartheta)|\leq C\,\vartheta^{
    \frac{\gamma-n}{\gamma}\,\gamma_\vartheta-1-\eps},\\
&&  \displaystyle 
|B_n(\vartheta)|+|\vartheta\, B_n(\vartheta )-\bar B_n|\leq C \vartheta^{\frac{\alpha}{2}\left(1-\frac{2n}{\gamma}\right)-\eps}.\label{assumeBn}
\end{eqnarray}
First of all this gives us a bound from below on $e$
\[\begin{split}
e(\rho,\vartheta)&\geq m+\frac{\rho^{\gamma-1}}{\gamma-1}+C^{-1}\,\frac{\vartheta^{\gamma_\vartheta}}{\rho}-\sum_{n=2}^N \vartheta\,\vartheta^{\frac{\gamma-n}{\gamma}\,\gamma_\vartheta-1-\eps} \,\frac{\rho^{n-1}}{n-1}\\
&\geq m+\frac{\rho^{\gamma-1}}{\gamma-1}+C^{-1}\,\frac{\vartheta^{\gamma_\vartheta}}{\rho}-C\,\sum_{n=2}^N \left(\rho^{\gamma-1-\eps'} +\frac{\vartheta^{\gamma_\vartheta-\eps'}}{\rho}\right),
\end{split}\]
by Young's inequality, so that this implies \eqref{assumptionswithtemp}$_{2}$. Assumption \eqref{nonmonotonewithtemp} is proved with an identical calculation. The same calculation also proves Assumption \eqref{assumptionswithtemp}$_{1}$ by showing that $s\leq C\,(\rho^{\gamma-1}+\vartheta^{\gamma_\vartheta}+1)$.

\medskip

\noindent {\rm 4)}  Then choosing 
\begin{equation}\label{phi}
\phi(\rho)=\frac{\rho^\gamma}{\gamma-1}+\sum_{ 2\leq n\leq N} \bar B_n\,\frac{\rho^n}{n-1}
      + \bar B_0,
\end{equation} 
and using again the second part of \eqref{assumeBn}
 we have that
\[\begin{split}
|P-\phi'(\rho)\,\rho+\phi(\rho)|&\leq C\,\sum_{n\leq N} |\vartheta\, B_n(\vartheta )-\bar B_n|\,\rho^n\leq C\,\sum_{n\leq N} \vartheta^{\frac{\alpha}{2}\left(1-\frac{2n}{\gamma}\right)-\varepsilon }\,\rho^n\\
&\leq C\,N\,(\rho^{\gamma/2}+\vartheta^{\alpha/2-\eps}),
\end{split}\]
still by Young's inequality. This yields the wanted estimate with the right inequalities on $\beta_1$ and $\beta_2$.
 The same calculation also proves that $|\partial_\vartheta P|\leq C\,(\rho^{\gamma/2+\eps'}+\vartheta^{\alpha/2-\eps'})$ with required assumptions on $\beta_3,\;\beta_4$.

\medskip

Note for a fixed $\vartheta$ then $P(\rho,\vartheta)$ is indeed increasing with respect to $\rho$ after a critical $\bar\rho_\vartheta$ which depends on $\vartheta$ and can be arbitrarily large where $\vartheta>>1$. This is the reason why $P$ does not satisfy any of the classical monotonicity assumption and why our new approach is needed.
  Our pressure law has two important parts: the radiative term (corresponding to $n=0$) to get compactness on the temperature and the $\phi(\rho)$ term to get compactness on the density 
in time and space.

\begin{remark}
In our work, the viscosity coefficients $\mu,\;\lambda$ are independent of the temperature $\vartheta$. Instead several models use temperature dependent coefficients $\mu(\vartheta),\;\lambda(\vartheta)$. To handle that case, the proof given below would have to be modified; the compactness of the temperature would have to be established first following what has been done in previous work
for monotone pressure laws in previous works.
\end{remark}

%
\section{The new strategy to get global existence of weak solutions}
The main difficulty is the construction of regular enough solutions of some approximate system that will allow us to derive our key a priori estimates and pass to the limit. We obtain solutions of the approximate system through a fixed point argument that strongly relies on our recent paper \cite{BrJaWa1}.

\smallskip

\noindent More precisely, we consider the following {\bf First Step:}  
We start with a prescribed temperature $\vartheta$ so that $p(\rho,\vartheta(t,x))$ satisfies the assumptions in \cite{BrJaWa1} for an heterogeneous pressure $p(\rho,t,x)$. 
This provides a map $\vartheta \mapsto (\rho,u)$ with $(\rho,u)$ a global weak solution of
\begin{equation}
  \partial_t \rho + {\rm div}(\rho u) = 0,\label{continuity}
  \end{equation}
and 
\begin{equation}
\partial_t (\rho u) + {\rm div} (\rho u \otimes u) - \mu \Delta u - (\lambda+\mu) \nabla {\rm div} u
+ \nabla (p(\rho,\vartheta(t,x))) = 0.\label{momentum}
\end{equation}

\smallskip 
\noindent The novelty in the present paper is the {\bf Second Step}:
The construction of the associated  temperature through the energy equation corresponding to the pressure without the barotropic part $\rho^{\gamma-1}/(\gamma-1)$
namely the one corresponding to
$$\widetilde P (\rho,\vartheta) = \vartheta \sum_{n=0}^N B_n (\vartheta) \rho^n.$$
To do so, we rewrite the energy equation
\begin{equation}
  \partial_t (\rho \widetilde e) + {\rm div} (\rho \widetilde e u) + \widetilde P {\rm div} u =
          {\mathcal S}: \nabla u + {\rm div} (\kappa(\vartheta)\nabla\vartheta)
          \label{modifiedenergy}
          \end{equation}
      in terms of a quasi-linear parabolic equation on $\widetilde g= \rho \tilde e$.

      Our goal is then to complete the fixed point argument by solving this equation for a fixed density $\rho$ and velocity field $u$. This is a non-trivial problem as the equation is singular and it requires several extra steps:
\begin{itemize}
      \item First of all, we need to regularize $\rho$ and $u$ in space and time, be far from vacuum for the density and remove the singularity in $\tilde e$ near the $0$ temperature using a parameter $\varepsilon$. That will allow us to use in a first step the classical result \cite{LaSoUr} by O.A. Ladyzenskaya, V.A. Solonnikov, N.N. Uraltceva and  get existence (\cite{LaSoUr}) of some classical solution $\widetilde g_\eps$ for the regularized equation.
\item In a second step, we may pass to the limit $\varepsilon \to 0$ to obtain the actual solution $\widetilde g$, using the expected a priori estimates for classical solutions.
\item The third step  consists in recovering the temperature $\vartheta$
such that $\rho \widetilde e (\rho, \vartheta) = \widetilde g$ using an implicit function 
procedure thanks to the key property $\partial_\vartheta \widetilde e >0$ and the fact 
that $\vartheta$ is more regular.
\item The last step
consists in deriving uniform estimates on $\vartheta$ by transforming estimates from step 1 and making sure that there are uniform in the various regularizing parameters. This uniform
estimate is obtained through the entropy equation derived from the
energy equation as explained earlier.
\end{itemize}
Once this is done, we obtain a map on the temperature $\vartheta$: From an initial $\vartheta_i$, we obtain $(\rho,\,u)$ solving \eqref{continuity}-\eqref{momentum}. We then obtain the ``new'' temperature $\vartheta$ that solves \eqref{modifiedenergy} for those $\rho$ and $u$. 

\smallskip

\noindent The {\bf Third and Last Step} is then to get a fixed point on the temperature, for example through Schauder theorem, we need to obtain some compactness on the map. This turns out to be rather straightforward: if  $\vartheta_i$ is bounded in some appropriate Sobolev space, then  $\log \vartheta$ belongs to some $H^1$ and we can derive compactness in space and time using the radiative part in the pressure law. 
\section{First step: Obtaining $\rho$ and $u$ given $\vartheta$}
The goal of this section is to obtain existence of appropriate solutions $\rho$ and $u$ if we already know the temperature $\vartheta$. This will form the first step in our final fixed point argument.

This step heavily relies on the existence result already obtained in \cite{BrJaWa1} to construct
$(\rho,u)$ solution of the compressible Navier-Stokes equations with an heterogeneous pressure
law $P(t,x,\rho)$ with explicit dependence on time and position.
\subsection{Recalling the main result from \cite{BrJaWa1}}
The result in \cite{BrJaWa1} requires the following assumptions with 
$\gamma > 3d/(d+2)$:
\begin{itemize}
\item[(P1)] There exists $q>2$, $0\le \overline \gamma \le \gamma/2$ and a smooth function $P_0$ such that
$$|P(t,x,s) - P_0(t,x,s)| \le C R(t,x) + C s^{\overline \gamma}  
\hbox{ for } R \in L^q ([0,T]\times  T^d)$$
\item[(P2)] There exists $p <\gamma +2\gamma/d - 1$ and $q>2$ with $\vartheta_1 \in L^q([0,T]\times T^d)$, such that
$$ C^{-1} s^\gamma - \vartheta_1(t,x) \le P_0(t,x,s) \le C s^p + \vartheta_1(t,x).$$
\item[(P3)] There exists $p < \gamma + 2\gamma/d -1$ and $\vartheta_2 \in L^q([0,T]\times T^d)$ with
$q>1$ such that
$$|\partial_t P_0(t,x,s)| \le C s^p + \vartheta_2(t,x).$$
\item[(P4)]
$|\nabla_x P_0(t,x,s)| \le C s^{\gamma/2} + \vartheta_3(t,x)$
for $\vartheta_3 \in L^2(0,T; L^{2d/(d+2)}(T^d))$.
\end{itemize}
and the following one for the propagation of compactness on the density:
\begin{itemize} 
\item[(P5)] The pressure $P$ is locally Lipschitz in the sense that
  \[\begin{split}
  |P(t,x,z) - P(t,y,w)| \le Q(t,x,y,z,w) + &C \Bigl((z^{\gamma-1} + w^{\gamma-1})\\
&    + (\widetilde P(t,x) +\widetilde P(t,y))\Bigr)\, |z-w|
  \end{split}
  \]
for some $\widetilde P \in L^{s_0}([0,T]\times T^d)$ with $s_0>1$. Moreover for any sequence $\rho_k(t,x)$ uniformly bounded in $L^\infty([0,\ T],\ L^\gamma(\Pi^d))$ then
$Q(t,x,y,\rho_k(t,x),\rho_k(t,y))$ is uniformly bounded in $L^{s_1} ([0,T] \times T^{2d})$ for some $s_1>1$.
\item[(P6)] For any sequence $\rho_k(t,x)$ uniformly bounded in $L^\infty([0,\ T],\ L^\gamma(\Pi^d))$, the functions $Q,\widetilde P$ satisfy that for some $r_h \to 0$ as $h\to 0$
  \[\begin{split}
  r_h=\sup_k\frac{1}{\|K_H\|_{L^1}} \int_0^T \int_{T^{2d}}
 &K_h(x-y) \bigl(|\widetilde P(t,x) - \widetilde P(t,y)|^{s_0} \\
                &+|Q(t,x,y,\rho_k(t,x),\rho_k(t,y))|^{s_1} 
               \bigr)\, dx dy dt,
  \end{split}
  \]
  where
  \[
K_h(x)=\frac{1}{(h+|x|)^d},\qquad\mbox{for}\quad |x|\leq \frac{1}{4},
\]
with $K_h$ smooth in $\Pi^d\setminus B(0,1/4)$ and with compact support in $\Pi^d\setminus B(0,1/3)$. 
\end{itemize}

\bigskip

We are now ready to recall the main result from \cite{BrJaWa1}
\begin{theorem}
\label{mainprevious}
Assume the initial data $m_0$ and $\rho_0\ge 0$ with $\int_{{\mathbb T}^d} \rho_0 = M_0>0$ satisfy
$$\ce(\rho_0, m_0) = \int_{\TT^{d}} \left(\frac{|m_0|^2}{2\rho_0} + \rho_0 e_P(0,x,\rho_0)\right)\,dx <\infty,$$
where
\[
e_P(t,x,\rho) = \int_{\rho_{ref}}^\rho \frac{P(t, x, s)}{s^2}\,ds
\]
with $m_0=0$ when $\rho_0=0$. Suppose that the pressure $P$ is given by \eqref{virial} with properties \eqref{P2}--\eqref{P6bis}. Then for any $T>0$ there exists a global weak solution to Compressible Navier--Stokes System
 \eqref{eq:00}--\eqref{eq:01} with the strain tensor \eqref{viscous}. Namely it satisfies the equations in a Distribution sense, the following bounds
$$ u \in L^2(0,T;H^1(\TT^d)), \qquad |m|^2/2\rho \in L^\infty(0,T;L^1(\TT^d))$$
$$ \rho \in {\mathcal C}([0,T], L^\gamma(\TT^d) \hbox{ weak }) \cap L^p((0,T)\times \TT^d)
    \hbox{ where } 0 < p < \gamma (d+2)/d -1$$
and the initial conditions in a weak sense with the heterogeneous pressure state law $P$ satisfying the energy inequality
\[\begin{split}
      &\int_{\TT^d} \mathcal{E}_0(\rho,u)\,dx+\int_0^t\int_{\TT^d} \cs:\,\nabla_x u\,dx\,ds\leq {\mathcal E}(\rho_0,u_0)\\
      &\qquad+\,\int_0^t\int_{\TT^d} \div_x u(s,x)\,(P(s,x,\rho(s,x))-P_0(s,x,\rho(s,x)))\,ds\,dx\\
      &\qquad +\int_0^t\int_{\TT^d} (\rho\,(\partial_t e_0)(t,x,\rho)+\rho\,u\cdot(\nabla_x e_0)(t,x,\rho))\,dx\,ds,
      \end{split}
\]
where 
\[
\mathcal{E}_0(\rho,u) = |\rho u|^2/2\rho + \rho\,e_0(t,x,\rho),\quad e_0(t,x,\xi)=\int_{\rho_{ref}}^\xi \frac{P_0(t,x,s)}{s^2} \, ds.
\]
Finally if some sequence $P_n$ satisfies uniformly the assumptions $(P1)-(P6)$ then the corresponding solution $\rho_n$ is compact in $L^1([0,\ T]\times\Pi^d)$.
\end{theorem}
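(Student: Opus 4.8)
The plan is to construct solutions of~\eqref{continuity}--\eqref{momentum} by a Feireisl--Lions type multi-level approximation --- artificial viscosity $\eps\Delta\rho$ in the continuity equation, an artificial pressure $\delta\rho^\beta$ with $\beta$ large, and a Faedo--Galerkin truncation of the momentum equation --- and then to pass to the limit in the parameters. For a fixed approximation level existence is classical, and the heterogeneous pressure is handled by writing $P(t,x,\rho)=P_0(t,x,\rho)+(P-P_0)$, the remainder being, by (P1), an $L^q$ perturbation of size at most $\rho^{\gamma/2}$. Testing the momentum equation by $u$ produces the energy inequality of the statement; assumption (P2) converts it into the $L^\infty_tL^\gamma_x$ bound on $\rho$ and the $L^2_tH^1_x$ bound on $u$, while (P3)--(P4) control the forcing coming from the $t$- and $x$-dependence of $P_0$. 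The improved integrability $\rho\in L^p$ for $p<\gamma(d+2)/d-1$ then follows from testing the momentum equation against the Bogovskii operator applied to $\rho^\nu-\langle\rho^\nu\rangle$ for a small power $\nu$, the exponent thresholds in (P2)--(P3) being exactly what keeps the pressure contributions in that identity bounded.

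The heart of the matter, and the step I expect to be the main obstacle, is the strong compactness of the density. Since $P$ need not be monotone in $\rho$, the classical effective-viscous-flux plus monotonicity argument of Lions and Feireisl is unavailable, so the plan is instead to run the authors' quantitative compactness scheme: introduce a weight $w_h$ solving a transport equation with a damping built from $|\div u|$, and follow the functional $\int_{\TT^{2d}} K_h(x-y)\,w_h\,\Phi(|\rho(t,x)-\rho(t,y)|)\,dx\,dy$, with $\Phi$ concave, along the renormalized continuity equation for two solutions (or for a solution against its own mollification). The local Lipschitz bound (P5) produces the commutator terms, which split into a ``good'' part carrying the growth $\rho^{\gamma-1}+\widetilde P$ that the weight is designed to absorb, and a ``bad'' part $Q$ whose smallness at scale $h$ is exactly the content of (P6). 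Showing this functional is $o(1)$ as $h\to0$, uniformly in the approximation parameters, gives strong $L^1$ convergence of $\rho$.

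With $\rho_n\to\rho$ strongly one then passes to the limit in every nonlinear term and recovers the distributional formulation, the energy inequality, the weak-in-time continuity, and the initial data in the stated sense. The final assertion --- that a sequence $P_n$ satisfying (P1)--(P6) uniformly yields a sequence $\rho_n$ compact in $L^1([0,T]\times\Pi^d)$ --- is not a separate argument but a rereading of the previous one: every constant used above, in the energy estimate, in the Bogovskii bound, and above all in the decay $r_h\to0$ of the compactness functional, depends only on the structural constants in (P1)--(P6) and on the initial energy, so the $h$-uniform smallness is inherited by the whole sequence, which is precisely $L^1$-compactness.
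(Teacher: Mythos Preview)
The paper does not prove this theorem: it is stated explicitly as ``the main result from \cite{BrJaWa1}'' and is simply recalled, without proof, as a black box to be applied in the next subsection. So there is no proof in the paper to compare your proposal against.

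That said, your sketch is a faithful high-level outline of the argument in \cite{BrJaWa1} (and its predecessor \cite{BreJab18}): multi-level approximation of Feireisl--Lions type, the energy inequality in the modified form with the $(P-P_0)$ remainder, extra integrability via a Bogovski\u{\i}-type test, and then the quantitative compactness functional $\int K_h(x-y)\,w_h\,\Phi(|\rho(x)-\rho(y)|)$ with a transported weight absorbing the Lipschitz part of the pressure commutator while (P6) kills the residual $Q$ at small scales. You have also correctly read the final compactness assertion as a uniform-in-parameters rereading of the same estimate. If anything is underspecified in your sketch, it is the handling of the effective-flux identity that still enters the weighted estimate (the method does not bypass the effective flux entirely; it uses it to trade $\div u$ against the pressure inside the weight equation), and the precise bookkeeping needed to make the $r_h\to0$ hypothesis in (P6) interact with the exponents in (P5). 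But as a plan this is exactly the route of the cited paper, and nothing more is expected here since the present paper treats the result as known.
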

\subsection{Existence given $\vartheta$}
We may easily deduce an existence result from~\ref{mainprevious}, by checking that for a given~$\vartheta(t,x)$, the pressure $P(t,x,\rho)=P(\vartheta(t,x),\rho)$, where $P(\vartheta,\rho)$ satisfies~\eqref{P2}--\eqref{P7}, also satisfies (P1)-(P6) above. 
\begin{theorem}
Assume that $\vartheta\in L^{\alpha-\eps'}([0,\ T]\times\Pi^d)\cap L^1([0,\ T],\; W^{\lambda,1}(\Pi^d))$ for some $\lambda>0$ and $\eps'$ small enough. Assume that $P(\vartheta,\rho)$ given by \eqref{virial} with \eqref{P2}--\eqref{P6bis}. Assume moreover that the initial data $m_0$ and $\rho_0\ge 0$ with $\int_{{\mathbb T}^d} \rho_0 = M_0>0$ satisfy
\[
\ce(\rho_0, m_0) = \int_{\TT^{d}} \left(\frac{|m_0|^2}{2\rho_0} + \rho_0 e_P(0,x,\rho_0)\right)\,dx <\infty,\]
where
\[
e_P(t,x,\rho) = \int_{\rho_{ref}}^\rho \frac{P(\vartheta(t,x), s)}{s^2}\,ds.
\]
with $m_0=0$ when $\rho_0=0$.
Then for any $T>0$ there exists a global weak solution to Compressible Navier--Stokes System \eqref{eq:00}--\eqref{eq:01} with the strain tensor given by \eqref{viscous}. More precisely it satisfies the equations in the distribution sense and the bounds
$$ u \in L^2(0,T;H^1(\TT^d)), \qquad \rho\,|u|^2/2 \in L^\infty(0,T;L^1(\TT^d))$$
$$ \rho \in {\mathcal C}([0,T], L^\gamma(\TT^d) \hbox{ weak }) \cap L^p((0,T)\times \TT^d)
    \hbox{ where } 0 < p < \gamma (d+2)/2 -1$$
with the initial conditions in a weak sense and with the heterogeneous pressure state law $P(\vartheta(t,x),.)$ satisfying the energy inequality
\begin{equation}
  \begin{split}
      &\int_{\TT^d} \mathcal{E}_0(\rho,u)\,dx+\int_0^t\int_{\TT^d} \cs:\nabla_x u\,dx\,ds\leq {\mathcal E}(\rho_0,u_0)\\
      &\qquad+\,\int_0^t\int_{\TT^d} \div_x u(s,x)\,(P(\vartheta(s,x),\rho(s,x))-P_0(\rho(s,x)))\,ds\,dx,\\
      \end{split}\label{energy0}
\end{equation}
where 
\[
\begin{split}
&P_0(\rho)=\rho^\gamma+\sum_{n=0}^N \bar B_n\,\rho^n,\\
  &\mathcal{E}_0(\rho,u) = |\rho u|^2/2\rho + \rho\,e_0(\rho),\quad e_0=\int_{\rho_{ref}}^\rho \frac{P_0(s)}{s^2} \, ds.
\end{split}
\]
Finally if some sequence $\vartheta_n$ is uniformly bounded in $L^{\alpha-\eps'}([0,\ T]\times\Pi^d)\cap L^1([0,\ T],\; W^{\lambda,1}(\Pi^d))$, then the corresponding solution $\rho_n$ is compact in $L^1([0,\ T]\times\Pi^d)$.
\label{existrhou}
\end{theorem}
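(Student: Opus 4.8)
\emph{Proof strategy.} The plan is to deduce Theorem~\ref{existrhou} directly from Theorem~\ref{mainprevious} by checking that, for a fixed $\vartheta(t,x)$ in the stated class, the (now space--time inhomogeneous) pressure $P(t,x,\rho):=P(\vartheta(t,x),\rho)$ satisfies assumptions (P1)--(P6), with the frozen reference pressure $P_0(\rho)=\rho^\gamma+\sum_{n=0}^N\bar B_n\rho^n$ (whose primitive $e_0=\int_{\rho_{ref}}^\rho P_0(s)/s^2\,ds$ is exactly the one in the statement). Since every constant produced in the verification depends on $\vartheta$ only through $\|\vartheta\|_{L^{\alpha-\epsilon'}}$ and $\|\vartheta\|_{L^1_tW^{\lambda,1}_x}$, the final compactness assertion for a bounded sequence $\vartheta_n$ will follow at once from the last sentence of Theorem~\ref{mainprevious}. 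The energy inequality~\eqref{energy0} is the one of Theorem~\ref{mainprevious} specialised to $e_0=e_0(\rho)$ independent of $(t,x)$, so the $\partial_t e_0$ and $u\cdot\nabla_x e_0$ contributions drop out.

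The first four conditions are straightforward bookkeeping. For (P1) one writes $P(t,x,\rho)-P_0(\rho)=\sum_{n=0}^N(\vartheta(t,x)B_n(\vartheta(t,x))-\bar B_n)\rho^n$ and bounds each coefficient by~\eqref{P6bis}; Young's inequality then gives $|P-P_0|\le C\rho^{\gamma/2}+CR(t,x)$ with $R$ a sum of powers $\vartheta^{\theta_n}$, $\theta_n<\bar\alpha/2<\alpha/2$, hence $R\in L^q$ for some $q>2$ because $\vartheta\in L^{\alpha-\epsilon'}$, and $\overline\gamma=\gamma/2$ is admissible. For (P2), $P_0$ is a polynomial with leading term $\rho^\gamma$ and, since $\gamma>d$ gives $2\gamma/d>2>1$, one has $\gamma<\gamma+2\gamma/d-1$, so $p=\gamma$ works with $\vartheta_1$ a constant. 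Conditions (P3) and (P4) are trivial because $P_0$ depends neither on $t$ nor on $x$, so $\partial_tP_0=0$ and $\nabla_xP_0=0$.

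For the Lipschitz bound (P5) I would split $|P(\vartheta(x),z)-P(\vartheta(y),w)|\le|P(\vartheta(x),z)-P(\vartheta(x),w)|+|P(\vartheta(x),w)-P(\vartheta(y),w)|$. The first term is controlled via~\eqref{nonmonotonewithtemp} by $C(z^{\gamma-1}+w^{\gamma-1}+\widetilde P(t,x))|z-w|$ with $\widetilde P(t,x)=\vartheta(t,x)^{\gamma_\vartheta}$, and $\widetilde P\in L^{s_0}$ with $s_0=(\alpha-\epsilon')/\gamma_\vartheta>1$ since $\gamma_\vartheta\le\alpha/2$. The second term is taken as $Q(t,x,y,z,w):=\sum_{n=0}^N|\vartheta(t,x)B_n(\vartheta(t,x))-\vartheta(t,y)B_n(\vartheta(t,y))|(z^n+w^n)$; using~\eqref{P6bis} through $\bar B_n$, the oscillation coefficient is bounded by $C(\vartheta(t,x)^{a_n}+\vartheta(t,y)^{a_n})$ with $a_n=\tfrac{\bar\alpha}{2}(1-\tfrac{2n}{\gamma})-\epsilon<\alpha/2$, so that for any sequence $\rho_k$ bounded in $L^\infty_tL^\gamma_x$ the quantity $Q(t,x,y,\rho_k(x),\rho_k(y))$ lies in $L^{s_1}([0,T]\times T^{2d})$ for a suitable $s_1>1$ --- the constraints $ns_1\le\gamma$ and $a_ns_1\le\alpha-\epsilon'$ being simultaneously satisfiable with $s_1>1$ precisely because $\gamma>2N$ and $a_n<\alpha/2$.

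The only genuinely delicate point is (P6), namely the $K_h$-weighted modulus-of-continuity bound $r_h\to0$. For $\widetilde P^{s_0}=\vartheta^{\gamma_\vartheta s_0}$ I would estimate $|\vartheta(x)^{\gamma_\vartheta}-\vartheta(y)^{\gamma_\vartheta}|\le C(\vartheta(x)^{\gamma_\vartheta-1}+\vartheta(y)^{\gamma_\vartheta-1})|\vartheta(x)-\vartheta(y)|$ and then combine Hölder in $(t,x,y)$ --- placing the first factor in $L^{\alpha-\epsilon'}$ --- with the fact that $\vartheta\in L^1_tW^{\lambda,1}_x$ controls the $\|K_h\|_{L^1}$-normalised average of $|\vartheta(x)-\vartheta(y)|$ and forces it to $0$ as $h\to0$ at a rate governed by $\lambda$, which is precisely the kind of estimate already used in~\cite{BrJaWa1}. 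For $Q^{s_1}$ the $\vartheta$-oscillation factor is handled identically, while the multiplicative factor $\rho_k^n$, for which only an $L^\infty_tL^\gamma_x$ bound and no modulus of continuity is available, is absorbed by a further Hölder/Young split that lowers the working exponent --- an operation one can afford because $\gamma>2N\ge2n$. I expect transferring the spatial regularity of $\vartheta$ through the nonlinearities $\vartheta\mapsto\vartheta B_n(\vartheta)$ and through the product with $\rho_k^n$, while keeping all bounds uniform in $k$, to be the heart of the argument; the rest is routine. Once (P1)--(P6) are verified, Theorem~\ref{mainprevious} supplies the solution $(\rho,u)$, the stated regularity, the energy inequality~\eqref{energy0}, and --- since the constants depend only on the two norms of $\vartheta$ listed above --- the compactness of $\rho_n$ in $L^1([0,T]\times\Pi^d)$ whenever $\vartheta_n$ is bounded in $L^{\alpha-\epsilon'}([0,T]\times\Pi^d)\cap L^1([0,T],W^{\lambda,1}(\Pi^d))$.
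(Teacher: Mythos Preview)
Your proposal is correct and follows essentially the same route as the paper: verify (P1)--(P6) for $P(t,x,\rho)=P(\vartheta(t,x),\rho)$ with the frozen reference $P_0(\rho)=\rho^\gamma+\sum_n\bar B_n\rho^n$, and then invoke Theorem~\ref{mainprevious}. The only cosmetic difference is your choice of $\widetilde P$: you take $\widetilde P=\vartheta^{\gamma_\vartheta}$ via the bound~\eqref{nonmonotonewithtemp} on $\partial_\rho P$, whereas the paper takes $\widetilde P=C|\vartheta|^{\bar\alpha-\eps}$ coming directly from~\eqref{P6bis}; both choices satisfy $\widetilde P\in L^{s_0}$ with $s_0>1$ and admit the same interpolation argument for (P6), and the resulting $Q$ is identical in both treatments.
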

\begin{proof}

\medskip

\noindent {\it Property \rm (P1).}  We note that
\[
|P-P_0|\leq \sum_{n=0}^N |\vartheta(t,x)\,B_n(\vartheta(t,x)) -\bar B_n|\,\rho^n\leq N\,\rho^{\gamma/2}+\sum_{n=0}^N |\vartheta(t,x)\,B_n(\vartheta(t,x)-\bar B_n|^{\gamma/(\gamma-2n)}.
\]
By assumption~\eqref{P6bis}, this implies that
\[
|P-P_0|\leq N\,\rho^{\gamma/2}+C\,\sum_{n=0}^N |\vartheta(t,x)|^{\bar \alpha\,\gamma/(2\,\gamma) -\eps }=N\,\rho^{\gamma/2}+C\,N\,|\vartheta(t,x)|^{\bar \alpha/2-\eps}.
\]
This leads us to define
\[
R(t,x)=\,N\,|\vartheta(t,x)|^{\bar \alpha/2 -\eps},
\]
and we can immediately verify that $R\in L^q_{t,x}$ for some $q>2$ since we assumed that $\vartheta\in L^{\alpha-\eps'}_{t,x}$ with $\bar\alpha\leq \alpha$ for $\eps'$ small enough w.r.t. $\eps$.

\medskip

\noindent {\it Property \rm (P2).}  We can check (P2) almost immediately as well by taking $\vartheta_1(t,x)=C$ for some large constant $C$, as for example
\[
|P_0-\rho^\gamma|\leq \sum_{n=0}^N |\bar B_n|\,\rho^n\leq C+C\,\rho^N,
\]
where we recall that $N\leq \gamma/2$.

\medskip

\noindent {\it Properties \rm (P3) \it and \rm (P4).}
As indicated in the statement of the theorem, we take
$P_0 = \phi'(\rho)\rho - \phi(\rho)$ where $\phi$ is given by \eqref{phi}, that is
\[
P_0(t,x,\rho)=\rho^\gamma+\sum_{n=0}^N \bar B_n\,\rho^n.
\]
This directly implies (P3) and (P4) since $P_0$ does not explicitly depends on $t$ or $x$ and thus $\partial_t P_0=0$ and $\nabla_x P_0=0$. Consequently we also have that $\partial_t e_0=0$ and $\nabla_x e_0=0$ so that we do not have the corresponding terms in the energy equality.

\medskip
\noindent {\it Properties \rm (P5) \it and \rm (P6).}  Observe that
\[
\begin{split}
  &|P(t,x,z)-P(t,y,w)|\leq C\,(z^{\gamma-1}+w^{\gamma-1})\,|z-w|\\
  &\qquad+C\,\sum_{n=0}^N |\vartheta(t,x)\,B_n(\vartheta(t,x))+\vartheta(t,y)\,B_n(\vartheta(t,y))|\,(z^{n-1}+w^{n-1})\,|z-w|\\
  &\qquad+C\,\sum_{n=0}^N |\vartheta(t,x)\,B_n(\vartheta(t,x))-\vartheta(t,y)\,B_n(\vartheta(t,y))|\,(z^{n}+w^{n}).
\end{split}
\]
Therefore
\begin{equation}
\begin{split}
  &|P(t,x,z)-P(t,y,w)|\\
  &\leq C\,\left(z^{\gamma-1}+w^{\gamma-1}+\sum_{n=0}^N |\vartheta(t,x)\,B_n(\vartheta(t,x))+\vartheta(t,y)\,B_n(\vartheta(t,y))|^{\frac{\gamma-1}{\gamma-n}}\right)\,|z-w|\\
  &\qquad+C\,\sum_{n=0}^N |\vartheta(t,x)\,B_n(\vartheta(t,x))-\vartheta(t,y)\,B_n(\vartheta(t,y))|\,(z^{n}+w^{n}).
\end{split}\label{diffP}
\end{equation}
We can hence choose any $\tilde P$ s.t. 
\[
\tilde P(t,x)\geq\sum_{n=0}^N |\vartheta(t,x)\,B_n(\vartheta(t,x))-\bar B_n|^{\frac{\gamma-1}{\gamma-n}},
\]
or from~\eqref{P6bis} again, for some small $\eps>0$,
\[
\tilde P(t,x)\geq\sum_{n=0}^N |\vartheta(t,x)|^{\bar\alpha\frac{(\gamma-2n)(\gamma-1)}{(\gamma-n)\,2\,\gamma}-\eps}.
\]
Of course, since 
\[
\frac{(\gamma-2n)(\gamma-1)}{(\gamma-n)\,2\,\gamma}\leq 1,\quad\mbox{as}\ \gamma^2-2n\,\gamma-\gamma+2n\leq 2\,\gamma^2-2n\gamma,
\]
we may simply take
\[
\tilde P(t,x)=C\,|\vartheta(t,x)|^{\bar\alpha-\eps}.
\]
Since $\vartheta\in L^{\alpha-\eps'}_{t,x}$ and $\alpha\geq \bar\alpha$, this immediately imply that $\tilde P\in L^{s_0}_{t,x}$ for some $s_0>1$, provided again that $\eps'$ is small enough. Moreover since $\vartheta\in L^1_t W^{\lambda,1}_x$ for some $\lambda>0$, by interpolation we deduce that $\tilde P\in L^{s_0}_t W^{\tilde\lambda,s_0}_x$ for some $\tilde\lambda>0$ and for some $s_0>1$. This directly implies property (P6) on $\tilde P$.

From~\eqref{diffP}, we take
\[
Q(t,x,y,z,w)=C\,\sum_{n=0}^N |\vartheta(t,x)\,B_n(\vartheta(t,x))-\vartheta(t,y)\,B_n(\vartheta(t,y))|\,(z^{n}+w^{n}).
\]
Consider now any sequence $\rho_k$ uniformly bounded in $L^\infty_t L^\gamma_x$. We may directly bound for $s_1>1$ small enough
\[
\|Q(t,x,y,\rho_k(t,x),\rho_k(t,y)\|_{L^{s_1}_{t,x}}\leq C\,\|\rho_k\|_{L^\infty_t L^\gamma_x}\,\sum_{n=0}^N \|\vartheta(t,x)\,B_n(\vartheta(t,x))-\bar B_n\|_{L^{s_1}_t L^{\gamma/(\gamma-ns_1)}_x}.
\]
Still using assumption~\eqref{P6bis}, we have that
\[
\|\vartheta(t,x)\,B_n(\vartheta(t,x))-\bar B_n\|_{L^{s_1}_t L^{\gamma/(\gamma-n\,s_1)}_x}\leq \|\vartheta\|_{L^{s_1\,\left(\bar\alpha\,\frac{\gamma-2n}{2\gamma}-\eps\right)}_t
	 L^{s_1\left(\,\bar\alpha\,\frac{\gamma-2n}{2(\gamma-ns_1)}-\eps\right)}_x}^{\bar\alpha\,\frac{\gamma-2n}{2\gamma}-\eps}.
\]
Clearly both $\frac{\gamma-2n}{2\gamma}\leq1/2$ and $\frac{\gamma-2n}{2(\gamma-ns_1)}\leq 1/2$ as long as $s_1<2$. Since $\vartheta\in L^{\alpha-\eps'}_{t,x}$, we can take up to $s_1=2$ and have $Q(t,x,y,\rho_k(t,x),\rho_k(t,y))\in L^{s_1}_{t,x}$ uniformly in $k$.

We may similarly prove property (P6) for $Q$,
\[\begin{split}
&\int_0^T \int_{\Pi^{2d}} \frac{K_h(x-y)}{\|K_h\|_{L^1}}\, |Q(t,x,y,\rho_k(t,x),\rho_k(t,y))|^{s_1}\leq C\,\|\rho_k\|_{L^\infty_t L^\gamma_x}\\
&\qquad\qquad \times\sum_{n=0}^N \int_0^T \left(\int_{\Pi^{2d}} \frac{K_h(x-y)}{\|K_h\|_{L^1}}\,|\vartheta(t,x)\,B_n(\vartheta(t,x))-\vartheta(t,y)\,B_n(\vartheta(t,y)|^{\frac{\gamma}{\gamma-ns_1}}\right)^{s_1\,\frac{\gamma-ns_1}{\gamma}}.
\end{split}
\]
From our previous argument we know that $\vartheta\,B_n(\vartheta)-\bar B_n$ does belong to $L^{s_1}_t L^{\frac{\gamma}{\gamma-ns_1}}_x$ and in fact to some $L^p_t L^q_x$ with $p>s_1$ and $q>\frac{\gamma}{\gamma-ns_1}$. Moreover since $\vartheta\in L^1_t W^{\lambda,1}$ and $B_n$ is locally Lipschitz from~\eqref{P6bis}, we also have that $\vartheta\,B_n(\vartheta)-\bar B_n\in L^1_t W^{\lambda',1}$ for some $\lambda'>0$. By interpolation, this finally implies that $\vartheta\,B_n(\vartheta)-\bar B_n\in L^{s_1}_t W^{\lambda'',\frac{\gamma}{\gamma-ns_1}}_x$ which proves (P6) for $Q$.

\end{proof}
\section{Second step: Solve the temperature equation with $\rho$, $u$ given}
We first start to solve an equation related to the energy and then use an implicit function 
procedure to find the corresponding temperature. This is the important and new part in the
global existence construction procedure.
%
\subsection{An equivalent system with good unknowns}
 
\noindent From~\eqref{max:equ}, it is straightforward to check that 
\begin{equation}
	e = m(\vartheta) + \frac{\rho^{\gamma-1}}{\gamma-1} -\vartheta^2\sum_{N\geq n\geq2}B_n'(\vartheta)\frac{\rho^{n-1}}{n-1} + \vartheta^2B_0'(\vartheta)\frac{1}{\rho}.
\end{equation}
Instead of working on the system involving the quantity $\rho e$, we present here an equivalent system with what will prove to be an easier unknown to handle
\begin{equation}
	g = \rho \ee, 
	\label{g}
\end{equation}
where $\ee$ is given by
\begin{equation}
	\label{new:e}
	\ee = -\vartheta^2\sum_{2\leq n\leq N}B_n'(\vartheta)\frac{\rho^{n-1}}{n-1} + \vartheta^2B_0'(\vartheta)\frac{1}{\rho},
\end{equation}
where we recall that $N<\gamma/2$. Define a new pressure $\pp$ by
\begin{equation}
	\label{new:p}
	\pp = \vartheta\,\sum_{n=0}^{N} B_n(\vartheta)\,\rho^n.
\end{equation}
Then the good unknown $g$ satisfies 
\begin{equation}
	\label{equ:g}
	\partial_{t}g + \div(g u) + \pp\div u = \cs:\nabla u + \div (\kappa(\vartheta)\nabla\vartheta),
\end{equation}
where $\cs:\nabla u = \Tr(\cs\nabla u)$ as before.
\begin{remark}
	From the assumption~\ref{P4} on $P$, it follows easily that $\vartheta^2B_n'\le 0$ for $n\ge2$, which implies that $B_n$ is a decreasing function in $\vartheta$ for $n\ge2$. Moreover, in view of assumptions \ref{P2} and \ref{P4}, we have that
	\begin{equation*}
	\ee>0,\ \ \ \ \  \frac{\partial e}{\partial \vartheta}=\frac{\partial \ee}{\partial \vartheta} > 0, \ \ \ \ \ \ \mbox{for}\ \vartheta>0.
	\end{equation*}
\end{remark}
\subsection{The solvability of the quasi-linear parabolic system}
Consider the equation
\begin{equation}
	\label{qus:par}
	\partial_t f - \sum_i\frac{\partial}{\partial x_i}(a_i(t, x, f, \nabla f)) + \ao(t, x, f, \nabla f) = 0   \ \ \ \ \ \ (t, x)\in Q_T = [0, T]\times\TT^d 
\end{equation}
with the initial condition  
\begin{equation}\label{bcf}
f\vert_{t=0} = f_0.
\end{equation} 
We recall here the classical assumptions on the functions $a_0$ and $a=(a_1, a_2, \dots, a_d)(t,x,f,p)$ for $t\in [0,\ T]$, $x\in \TT^d$, $f\in \RR$ and $p\in\RR^d$ to obtain a solution $f(t,x)$. 
\begin{theorem}[\cite{LaSoUr}]
	\label{exi:div}
Suppose that
\begin{itemize}
	\item {\rm (H1).} The system~\eqref{qus:par} is parabolic in the sense that
	\begin{equation}
		\label{ass:par}
		c_1(f)|\xi|^2 \le \sum_{i,j}\frac{\partial a_i}{\partial p_j}\xi_i\xi_j \le c_2(f)|\xi|^2,
	\end{equation}
where $c_0$, $c_1$, and $c_2$ are positive, continuous and potentially depend on $f$.
	\item {\rm (H2).}  For $(t, x)\in \bar{Q_T}$ and for any $f$ and $p$, the inequality
	\begin{equation}
		\label{bd1:a}
		\sum_{i} |a_i(t, x, f, p)| + |\ao(t, x, f, p)| \le b(|f|, |p|) \phi_1(t, x)
	\end{equation}
holds with a continuous function $b$ and a function $\phi_1\in L^1(Q_T)$. 
    \item {\rm (H3).} With $|f|\le M$ where $M>0$ is a constant large enough 
    and arbitrary $p$ , we have the bound 
    \begin{equation}
    	\label{bd2:a}
    	\sum_{i=1}^{n}\left(|a_i|+\left|\frac{\partial a_i}{\partial f}\right|\right)(1+|p|) 
    	+ \sum_{i, j=1}^{n}\left|\frac{\partial a_i}{\partial x_j}\right| + |\ao| \le c_3(1+|p|^2).
    \end{equation}
for some $c_3>0$.
\item {\rm (H4).}  The functions $a_i$, $\partial a_i/\partial p_j$, $\partial a_i/\partial x_j$, and $\partial a_i/\partial f$ are H\"older continuous with exponent $\beta$, $\beta/2$, $\beta$, and $\beta$ respectively.
\item {\rm (H5).}  The following bounds holds,
\begin{equation}
	\label{bd3:a}
	\left|   \frac{\partial a_i}{\partial f},\ \frac{\partial a_i}{\partial t}, \frac{\partial \ao}{\partial p}, \ \frac{\partial \ao}{\partial f}, \ \frac{\partial \ao}{\partial t}   \right|
	\le \phi_2(t,x)
\end{equation}
for any $|f|, |p|\le M$ for some sufficiently large constant $M$, where $\phi_2(t,x) \in L^{r, p}$ with $r, p\ge2$.
\end{itemize}
Assuming $f_0 \in C_x^{2+\beta}$, then there exists a unique solution $f$ of the system~\ref{qus:par} such that $f\in C_t^{1+\beta/2}C_x^{2+\beta}$. Moreover, we have $\partial_{t,x} f\in L^2$.
\end{theorem}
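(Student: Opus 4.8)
This is essentially part of the classical quasilinear parabolic theory of~\cite{LaSoUr}; the plan is to recall the main steps and indicate where each hypothesis enters. First I would derive an $L^\infty$ bound on $f$: using the structure conditions~\eqref{bd1:a}--\eqref{bd2:a}, a level-set truncation (De Giorgi) argument on $(f-k)_+$ and $(f+k)_-$, together with the initial bound coming from $f_0\in C^{2+\beta}_x$, yields $\|f\|_{L^\infty(Q_T)}\le M$ with $M$ depending only on $T$ and the data. With $f$ now confined to a fixed bounded set, \eqref{ass:par} makes the equation uniformly parabolic, and the De Giorgi--Nash--Moser theory (again using~\eqref{bd2:a}) provides a Hölder estimate $f\in C^{\delta,\delta/2}(\bar Q_T)$ for some $\delta\in(0,1)$, uniform up to $t=0$ since $f_0$ is smooth.

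The central step is the gradient bound $\nabla f\in L^\infty(Q_T)$, and this is exactly where the quadratic growth in $p$ allowed in~\eqref{bd2:a} is essential. Formally differentiating~\eqref{qus:par} in $x_k$, testing against a high power of $|\nabla f|$ truncated at large levels (or, when enough regularity is already available, running a Bernstein-type estimate on $w=|\nabla f|^2$), one obtains $\|\nabla f\|_{L^\infty(Q_T)}\le C$; feeding this back into the linear theory upgrades it to $\nabla f\in C^{\delta',\delta'/2}(\bar Q_T)$ for some $\delta'\in(0,1)$.

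Once $f$ and $\nabla f$ are Hölder continuous, I would freeze the coefficients: the equation becomes the linear parabolic equation $\partial_t f - \partial_{x_i}\bigl(\bar a_{ij}(t,x)\,\partial_{x_j} f\bigr) = F(t,x)$ with $\bar a_{ij}=(\partial a_i/\partial p_j)(t,x,f,\nabla f)$ and $F$ collecting the lower-order terms, both Hölder in $(t,x)$ by the continuity assumptions (H4); parabolic Schauder estimates then give $f\in C^{1+\beta/2}_t C^{2+\beta}_x$, compatible with $f_0\in C^{2+\beta}_x$. Existence follows from the Leray--Schauder theorem applied to the map $\Phi:v\mapsto f$, where $f$ solves the linearized problem with coefficients evaluated along $(v,\nabla v)$; the a priori estimates above, applied uniformly to the homotopy $\sigma\Phi$, $\sigma\in[0,1]$, supply the required boundedness and compactness. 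Uniqueness follows from~\eqref{bd3:a}: subtracting two solutions, testing the difference equation against the difference itself and using the pointwise bounds on $\partial_f a_i$, $\partial_f \ao$ and $\partial_p \ao$ gives a Gronwall inequality forcing the difference to vanish. Finally $\partial_{t,x} f\in L^2(Q_T)$ is the standard parabolic energy estimate, obtained by testing~\eqref{qus:par} with $\partial_t f$, and with $f$, respectively. The main obstacle is squarely the global gradient estimate: it is the only step that genuinely exploits the quadratic structure in~\eqref{bd2:a} and where the regularity theory is most delicate.
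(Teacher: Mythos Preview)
The paper does not prove this statement at all: Theorem~\ref{exi:div} is stated with the citation~\cite{LaSoUr} in its header and is simply invoked as a black-box classical result from Lady\v{z}enskaja--Solonnikov--Ural'ceva, with no proof or sketch given in the paper. Your sketch is a reasonable summary of the standard strategy in that reference (a priori $L^\infty$ bound, H\"older regularity via De Giorgi--Nash--Moser, gradient bound exploiting the quadratic growth in $p$, Schauder bootstrap, and Leray--Schauder for existence), so there is nothing to compare against and nothing to object to at the level of detail you chose.
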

\subsection{Solving an approximate system}
In order to solve the system, one way is to see \eqref{equ:g} as a quasi-linear parabolic equation of the unknown function $g$, namely $\vartheta=\vartheta(\rho, g)$. 
Equation~\eqref{equ:g} is in the right form since
\begin{equation}
	\label{div:form:g}
	\partial_t g - \sum_i \frac{\partial}{\partial x_i}(a_i(t, x, g, \nabla g)) + \ao(t, x, g, \nabla g) = 0,
\end{equation}
where
\begin{equation}
	\label{term:ai}
	a_i = -gu_i + (\kappa(\vartheta)\nabla\vartheta)_i
	     = -gu_i + \kappa(\vartheta)\frac{\partial \vartheta}{\partial g}\partial_i g + \kappa(\vartheta)\frac{\partial \vartheta}{\partial \rho}\partial_i \rho
\end{equation}
and
\begin{equation}
	\label{term:a}
     \ao = \pp\div u -\cs:\nabla u.
\end{equation}
There are however several regularity issues when trying to apply directly Theorem~\ref{exi:div}, which forces us to introduce several approximations.
First of all, there is a singularity in $\partial \vartheta/\partial g$ when $g$ or $\vartheta$ is close to $0$. Second the assumptions on the various functions $a_i$ and $a_0$ require some additional regularity on $\rho$ and $u$.

This leads us to look at an  approximate system where we modify the relation between $g$ and $\vartheta$. More specifically, for a given $\eps>0$, we first solve in $g_\eps$, the system 
\begin{equation}
	\label{app}
	\partial_tg_\eps - \sum_i\frac{\partial}{\partial x_i}(a_i^\eps(t, x, g_\epsilon, \nabla g_\epsilon)) + a^\eps_0(t, x, g_\epsilon, \nabla g_\epsilon) = 0,
\end{equation}
with
\begin{equation}
	a_i^\eps (t, x, g_\epsilon, \nabla g_\epsilon) 
	= -g_\eps u_i + \kappa(\vartheta_\eps)\frac{\partial \vartheta_\eps}{\partial g_\eps}\partial_i g_\eps + \kappa(\vartheta_\eps)\frac{\partial \vartheta_\eps}{\partial \rho}\partial_i \rho,\label{aieps}
\end{equation}
and 
\begin{equation}
	a^\eps_0(t, x, g_\epsilon, \nabla g_\epsilon)= \pp_\eps\div u -\cs:\nabla u.\label{a0eps}
\end{equation}
However we take
\begin{equation}
g_\epsilon = \rho \ee_\eps,\qquad 	\ee_\eps =\eps\,\frac{\vartheta_\eps}{\rho}  -\vartheta_\eps^2\sum_{N\geq n\geq2}B_n'(\vartheta_\eps)\frac{\rho^{n-1}}{n-1} + \vartheta_\eps^2\,B_0'(\vartheta_\eps)\frac{1}{\rho},\label{gtothetaeps}
\end{equation}
which changes the relation between $\vartheta_\eps$ and $g_\eps$, resolves the degeneracy around $\vartheta=0$ and also implicitly modifies the $a_i$.

Finally, we  adapt
 the pressure $\pp_\eps$ to match the new energy and keep
\begin{equation}
	\pp_\eps = -\eps\,\vartheta_\eps\,\log\vartheta_\eps+ \vartheta_\eps\,\sum_{n=0}^{N} B_n(\vartheta_\eps)\,\rho^n.\label{Peps}
\end{equation}
We then have the following existence theorem for the approximate equation.
\begin{theorem}
	\label{exi:g}
	Let $\pp$ be defined in~\eqref{Peps} and $\vartheta_\eps$ be defined in term of $g_\eps$ through \eqref{gtothetaeps} for some $\rho\in C^1_t C^{2+\beta}_x$ and $u\in C^1_t C^{2+\beta}_x$. Then for any initial data $g_0>0 $ with $g_0\in C_x^{2+\beta}$, there exists a unique classical solution $g\in C_t^{1+\beta/2}C_x^{2+\beta}$ to the system~\eqref{div:form:g} where the $a_i^\eps$ and $a_0^\eps$ are given respectively by \eqref{aieps} and \eqref{a0eps}. 
\end{theorem}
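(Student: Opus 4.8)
The plan is to apply Theorem~\ref{exi:div} directly to the quasilinear system~\eqref{div:form:g} with $a_i^\eps,a_0^\eps$ as in~\eqref{aieps}--\eqref{a0eps}, so that the real work splits into three pieces: (i) making rigorous sense, with quantitative bounds, of the map $g_\eps\mapsto\vartheta_\eps$ implicitly defined by~\eqref{gtothetaeps}; (ii) an a priori two-sided bound $0<c(T)\le g_\eps\le M(T)$; and (iii) the verification of the structural conditions (H1)--(H5). For (i), freeze $(t,x)$ and read~\eqref{gtothetaeps} as $g_\eps$ being a function of $\vartheta_\eps$. Since $\ee_\eps=\ee+\eps\vartheta_\eps/\rho$ and $\partial_\vartheta\ee>0$ for $\vartheta>0$ (the Remark following~\eqref{equ:g}, which rests on the convexity of $B_0$ in~\eqref{P2} and the concavity~\eqref{P5}), one has
\[
\frac{\partial}{\partial\vartheta_\eps}\bigl(\rho\,\ee_\eps\bigr)=\rho\,\partial_\vartheta\ee+\eps\ \ge\ \eps>0,
\]
so the $\eps$-term is precisely what removes the degeneracy of $\partial_\vartheta\ee$ near $\vartheta=0$. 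Moreover $\rho\,\ee_\eps\to0$ as $\vartheta_\eps\to0^+$, and using $B_0'\ge C^{-1}\vartheta^{\gamma_\vartheta-2}$ from~\eqref{P3} together with $\vartheta^2B_n'\le0$ for $n\ge2$ from~\eqref{P5} one gets $\rho\,\ee_\eps\ge C^{-1}\vartheta_\eps^{\gamma_\vartheta}\to+\infty$. As $\rho$ is continuous and positive on the compact set $Q_T$, it satisfies $c_0\le\rho\le c_0^{-1}$ for some $c_0>0$ --- this is what ``far from vacuum'' buys us --- so the implicit function theorem yields a smooth strictly increasing inverse $\vartheta_\eps=\Theta_\eps(\rho,g_\eps)$ whose derivatives are bounded on every set $\{c_0\le\rho\le c_0^{-1},\ 0<g_\eps\le M\}$, with in particular $0<\partial_{g_\eps}\vartheta_\eps=(\rho\,\partial_\vartheta\ee+\eps)^{-1}\le\eps^{-1}$.

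For (ii) one uses the maximum principle. Positivity $g_\eps>0$ is propagated: at an interior spatial minimum $\nabla g_\eps=0$, so the transport term reduces to $-g_\eps\,\div u$, the diffusion term $\kappa(\vartheta_\eps)\,\partial_{g_\eps}\vartheta_\eps\,\Delta g_\eps\ge0$ is favorable, the remaining first-order contribution $\div(\kappa(\vartheta_\eps)\,\partial_\rho\vartheta_\eps\,\nabla\rho)$ tends to $0$ as $g_\eps\to0^+$ (because $\partial_\rho\vartheta_\eps\to0$ there), and $-a_0^\eps=-\pp_\eps\,\div u+\cs:\nabla u$ with $\pp_\eps\to0$ as $g_\eps\to0^+$ and $\cs:\nabla u=\tfrac\mu2\bigl|\nabla u+\nabla u^T-\tfrac2d\div u\,{\rm Id}\bigr|^2+\bigl(\tfrac{2\mu}d+\lambda\bigr)(\div u)^2\ge0$ by the constraint $\lambda+2\mu/d>0$; a standard barrier (comparison with a linear ODE) then keeps $g_\eps\ge c(T)>0$ on $[0,T]$. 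For the upper bound, comparing~\eqref{Peps} with~\eqref{gtothetaeps} and using~\eqref{P3}, \eqref{P4} and~\eqref{P6} one checks that $\pp_\eps$ grows at most linearly in $g_\eps$, i.e.\ $|\pp_\eps|\le C(1+g_\eps)$ with $C=C(\|\rho\|_{C^0(Q_T)},T)$; hence $|a_0^\eps|\le C(1+g_\eps)$ and Gronwall applied to $\sup_x g_\eps(t,\cdot)$ gives $g_\eps\le M=M(T)$.

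With $M$ now fixed, (iii) is bookkeeping. For (H1), \eqref{aieps} gives $\partial a_i^\eps/\partial p_j=\kappa(\vartheta_\eps)\,\partial_{g_\eps}\vartheta_\eps\,\delta_{ij}$, which by $\kappa\ge\kappa_1>0$ in~\eqref{con:ass} and $\eps\le\rho\,\partial_\vartheta\ee+\eps<\infty$ lies between two positive quantities depending continuously on $g_\eps=f$. For (H2), $\sum_i|a_i^\eps|+|a_0^\eps|$ is controlled by $b(|f|,|p|)$ times a function built from $u,\nabla u,\rho,\nabla\rho$, all in $C^1_tC^{2+\beta}_x\subset L^\infty(Q_T)\subset L^1(Q_T)$. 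For (H3), when $|f|\le M$ the $\partial_i g$ terms in $a_i^\eps$ are $O(1+|p|)$, so $(|a_i^\eps|+|\partial_f a_i^\eps|)(1+|p|)\le c_3(1+|p|^2)$ while $|\partial_{x_j}a_i^\eps|+|a_0^\eps|=O(1)$, the relevant derivatives being estimated from the smoothness of $\Theta_\eps$, $\kappa$ and the $B_n$ on the compact range $\{c_0\le\rho\le c_0^{-1},\,0<f\le M\}$ together with $\rho,u\in C^1_tC^{2+\beta}_x$. Finally (H4)--(H5): the required Hölder regularity of $a_i^\eps,\partial_{p_j}a_i^\eps,\partial_{x_j}a_i^\eps,\partial_f a_i^\eps$ and the $L^{r,p}$-bounds ($r,p\ge2$, here in fact $L^\infty(Q_T)$) on $\partial_f a_i^\eps,\partial_t a_i^\eps,\partial_p a_0^\eps,\partial_f a_0^\eps,\partial_t a_0^\eps$ follow by composing $\rho,u\in C^1_tC^{2+\beta}_x$ with the smooth maps $\Theta_\eps$, $\kappa$ (smooth since $\alpha\ge4$) and $B_n$, using that $g_0>0$ keeps us away from the only singularity of $\Theta_\eps$. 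Theorem~\ref{exi:div} then delivers the unique classical solution $g=g_\eps\in C^{1+\beta/2}_tC^{2+\beta}_x$ with $\partial_{t,x}g\in L^2$, which is the assertion.

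The main obstacle is concentrated in steps (i)--(ii): building $\Theta_\eps$ with derivative bounds uniform on sublevel sets --- exactly where the $\eps$-regularization in~\eqref{gtothetaeps} and the away-from-vacuum bound on $\rho$ are indispensable, curing the degeneracy of $\partial_\vartheta\ee$ at $\vartheta=0$ and of the $1/\rho$ factors at $\rho=0$ --- and extracting from~\eqref{P3}--\eqref{P6} the linear-in-$g_\eps$ growth of $\pp_\eps$ so that $a_0^\eps$ stays subcritical for the maximum principle. Once these two points are secured, the hypotheses of Theorem~\ref{exi:div} are verified essentially by inspection.
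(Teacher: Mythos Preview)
Your proof is correct and follows essentially the same route as the paper: invert~\eqref{gtothetaeps} via the strict monotonicity $\partial_\vartheta(\rho\ee_\eps)\ge\eps>0$, verify (H1)--(H5) of Theorem~\ref{exi:div} using the resulting bounds on $\partial_{g_\eps}\vartheta_\eps$ and $\partial_\rho\vartheta_\eps$, and secure positivity of $g_\eps$ by a maximum-principle argument exploiting $\cs:\nabla u\ge0$ together with $\pp_\eps,\partial_\rho\vartheta_\eps\to0$ as $g_\eps\to0^+$. Your treatment is in fact slightly more explicit than the paper's in one respect: you derive the a priori upper bound $g_\eps\le M(T)$ via Gronwall from $|\pp_\eps|\le C(1+g_\eps)$, whereas the paper establishes only strict positivity and leaves the upper bound implicit in the invocation of the Lady\v{z}enskaja--Solonnikov--Ural'tseva framework.
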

\begin{proof}
To simplify the notations, within this proof, we omit the $\eps$ subscript as it will not cause any confusion; we take the limit $\eps\to 0$ in the next subsection. We use Theorem~\ref{exi:div} to prove the existence result. 

\noindent {\it Important relations between $g$ and $\vartheta$.}
Through \eqref{gtothetaeps}, we first observe that $\vartheta$ can be seen as $\vartheta(t,x,g)$ or $\vartheta(\rho,g)$. This can be proved by showing that $g$ is strictly increasing in $\vartheta$ by differentiating~\eqref{gtothetaeps}.
We find that  
	\begin{equation}
		\label{der:g:the}
		\frac{\partial g}{\partial \vartheta} =\eps-\sum_{2\le n\le N}\frac{d}{d \vartheta}(\vartheta\,^2B_n'(\vartheta))\frac{\rho^n}{n-1} + \frac{d}{d \vartheta}(\vartheta^2B_0'(\vartheta)),
	\end{equation}
and	by assumptions~\ref{P2} and \ref{P4}
	we easily get a lower bound for $\partial g/\partial \vartheta $
	\begin{equation*}
			\frac{\partial g}{\partial \vartheta} \ge \eps+ 2\,\vartheta B_0' + \bar\vartheta^2 B_0'' \ge	\eps+\frac{1}{C}\vartheta^{\gamma_{\vartheta}-1}.
	\end{equation*}
 	Using assumptions~\ref{P2}, \ref{P4}, and \ref{P5}, we may further deduce an upper bound for $\partial g/\partial \vartheta $ as
 	\begin{equation*}
 	 \frac{\partial g}{\partial \vartheta} \le \eps+C\, \sum_{0\le n\le N} \vartheta^{(\gamma-n)\gamma_{\vartheta}/\gamma-1}\,\|\rho\|_{L^\infty}^n
 		\le \eps +C\,(\vartheta^{\gamma_{\vartheta}-1} + \vartheta^{(\gamma-N)\gamma_{\vartheta}/\gamma-1}).
 	\end{equation*}
 Combining the above two inequalities gives
	\begin{equation}
		\label{bd:g:the}
	\eps+\frac{1}{C}	\vartheta^{\gamma_{\vartheta}-1} \le	\frac{\partial g}{\partial \vartheta} 
		\le \eps+C\,(\vartheta^{\gamma_{\vartheta}-1} + \vartheta^{(\gamma-N)\gamma_{\vartheta}/\gamma-1}).
	\end{equation}
From the definition of $g$ in~\eqref{gtothetaeps}, we have that $g=0$ if $\vartheta=0$.  As a consequence,
\begin{equation}
	\label{bd:g}
	\eps\,\vartheta+\frac{1}{C}\,\vartheta^{\gamma_{\vartheta}} \le	g
	\le \eps\,\vartheta+C\,(\vartheta^{\gamma_{\vartheta}} + \vartheta^{(\gamma-N)\gamma_{\vartheta}/\gamma}).
\end{equation}
We also need an upper bound of $\partial \vartheta/\partial g$ in term of $g$ as
\begin{equation}
	\label{bd:the:g}
	 	\frac{\partial \vartheta} {\partial g} = \left(\frac{\partial g}{\partial \vartheta} \right)^{-1}
	 	\le
	 	\frac{C}{\eps+\vartheta^{\gamma_{\vartheta}-1}} \le \frac{C}{\eps},
\end{equation}
together with a lower bound
\begin{equation}
	\label{lbd:the:g}
	 	\frac{\partial \vartheta} {\partial g} = \left(\frac{\partial g}{\partial \vartheta} \right)^{-1}
	 	\geq
	 	\frac{1}{\eps+C\,\vartheta^{\gamma_{\vartheta}-1}+C\,\vartheta^{(\gamma-N)\,\gamma_0/\gamma}} \geq
\frac{1}{M(g)}
\end{equation}
with $M\ge \eps$ being a smooth function of $g$, where
we used~\eqref{bd:g:the} and \eqref{bd:g}.

\medskip

\noindent {\it Hypothesis \rm (H1).}
We also note, for further use, that~\eqref{con:ass} yields
\begin{equation}
	\label{bd:con}
1\leq	\kappa(\vartheta) \le
	C\,(g^{\alpha/\gamma_{\vartheta}}+1).
\end{equation}
We can then make explicit the various coefficients with for example
\[
a_i(t,x,g,p)=a_i(\rho,\nabla\rho,u,g,p)=-g\,u_i+\kappa(\vartheta(\rho,g))\, \frac{\partial\vartheta}{\partial g}\,p_i+\kappa(\vartheta(\rho,g))\, \frac{\partial\vartheta}{\partial g}\,\partial_i \rho.
\]
This directly implies that
\begin{equation*}
	\frac{\partial a}{\partial p}\bigg|_{t,x} = \kappa(\vartheta)\frac{\partial \vartheta}{\partial g}\bigg|_{t,x} I_d =  \kappa(\vartheta)\left(\frac{\partial g}{\partial \vartheta}\right)^{-1}\bigg |_{t,x} I_d,
\end{equation*}
where $I_d$ is the identity matrix. Therefore, we obtain
\begin{equation*}
	c_1 |\xi|^2 \le \xi^T\frac{\partial a}{\partial p}\bigg|_{\rho} \xi \le c_2 |\xi|^2
\end{equation*} 
for any $\xi\in\RR^d$ provided $|g|\ge c_0$ where $c_1$ and $c_2$ are non-vanishing and depend polynomially  on $g$, which verifies \eqref{ass:par}. 

\medskip

\noindent {\it Hypothesis \rm (H2).} From the equation
\begin{equation*}
	g =\eps\,\vartheta  -\sum_{2\le n\le N}\vartheta^2B_n'(\vartheta)\frac{\rho^n}{n-1} + \vartheta^2B_0'(\vartheta),
\end{equation*}
again viewing $\vartheta$ as a function of both $g$ and $\rho$, we take derivative with respect to $\rho$, keeping $g$ fixed, to get
\begin{equation*}
	0 =\eps\,\frac{\partial\vartheta}{\partial\rho}  -\sum_{2\le n\le N}\frac{d}{d \vartheta}(\vartheta^2B_n'(\vartheta))\frac{\partial \vartheta}{\partial \rho}\frac{\rho^n}{n-1} + \frac{d}{d \vartheta}(\vartheta^2B_0'(\vartheta))\frac{\partial \vartheta}{\partial \rho} -\sum_{2\le n\le N}\vartheta^2B_n'(\vartheta)\frac{n}{n-1}\rho^{n-1}.
\end{equation*}
from where one obtains
\begin{equation*}
	\frac{\partial\vartheta}{\partial\rho} = \frac{\sum_{2\le n\le N}\frac{n}{n-1}\vartheta^2B_n'(\vartheta)\rho^{n-1}}{\eps-\sum_{2\le n\le N}\frac{d}{d \vartheta}(\vartheta^2B_n'(\vartheta))\frac{\rho^n}{n-1} + \frac{d}{d \vartheta}(\vartheta^2B_0'(\vartheta))}
	= 
	\sum_{2\le n\le N}\frac{n}{n-1}\vartheta^2B_n'(\vartheta)\rho^{n-1} \left( \frac{\partial g}{\partial \vartheta}\right)^{-1}.
\end{equation*}
By \eqref{bd:the:g}, assumptions~\ref{P4} and \ref{P5}, it holds 
\begin{align}
	\label{bd:the:rho}
	\left| \frac{\partial\vartheta}{\partial\rho} \right| 
	&\le
	\frac{C}{\eps+\vartheta^{\gamma_{\vartheta}-1}}\,\left| \sum_{2\le n\le N}\frac{n}{n-1}\vartheta^2B_n'(\vartheta)\rho^{n-1}\right|
	\nn&\le
	\frac{C}{\eps+\vartheta^{\gamma_{\vartheta}-1}}\,(\vartheta^{\gamma_{\vartheta}} + \vartheta^{(\gamma-N)\gamma_{\vartheta}/\gamma})
	\nn&
	\le \frac{C}{\eps}+C\,\vartheta \le \frac{C}{\eps}+C\,g^{1/\gamma_{\vartheta}}.
\end{align}
Combining~\eqref{term:ai} with the estimates~\eqref{bd:the:g}, \eqref{bd:con}, and \eqref{bd:the:rho}, we further get
\begin{align}
	\label{bd:ai}
	\sum_{i} |a_i(t,x,g,p)| \le& C\,g\,\|u\|_{L^\infty} + \,\frac{Cg^{\alpha/\gamma_{\vartheta}}+C}{\eps}\,|p|
	\nn&
	+ C(g^{\alpha/\gamma_{\vartheta}}+1)\,(\frac{1}{\eps}+g^{1/\gamma_{\vartheta}})\, \|\nabla \rho\|_{L^\infty}.
\end{align}
%
In view of~\ref{a0eps}, \eqref{Peps}, assumptions~\ref{P2}, and \ref{P5}, we similarly obtain an upper bound of $\ao$ as
\begin{equation}
	\label{bd3:a0}
	|\ao| \le C\,(1+\vartheta^{\gamma_{\vartheta}} + \vartheta^{(\gamma-N)\gamma_{\vartheta}/\gamma})\,\|\div u\|_{L^\infty} + \|S:\nabla u\|_{L^\infty}
	\le C g + C,
\end{equation}
by using again the regularity of $\rho$ and $u$. Hence the condition~\eqref{bd1:a} is verified by collecting the estimates~\eqref{bd:ai} and \eqref{bd3:a0}.

\medskip

\noindent {\it Hypothesis \rm (H3)--(H4).}
Next we turn to the verification of~\eqref{bd2:a}. First, we compute the derivative of $a_i$ as
\begin{equation*}
	\frac{\partial a_i(t,x,g,p)}{\partial g} = -u_i +  \kappa'(\vartheta)\left(\frac{\partial \vartheta}{\partial g}\right)^2\,p
	+ \kappa(\vartheta)\frac{\partial^2 \vartheta}{\partial g^2}\,p 
	+  \kappa'(\vartheta)\frac{\partial \vartheta}{\partial g}\frac{\partial \vartheta}{\partial \rho}\nabla\rho
	+ \kappa(\vartheta)\frac{\partial^2 \vartheta}{\partial g \partial \rho}\nabla\rho
\end{equation*}
From~\eqref{der:g:the}, it is straightforward to get
\begin{equation*}
	\frac{\partial \vartheta}{\partial g} = \frac{1}{\eps -\sum_{2\le n\le N}\frac{d}{d\vartheta}\,(\vartheta^2B_n'(\vartheta))\,\rho^n/(n-1) + \frac{d}{d\vartheta}(\vartheta^2B_0'(\vartheta))},
\end{equation*}
which leads to
\begin{equation*}
	\frac{\partial^2 \vartheta}{\partial g^2} = \frac{-\sum_{2\le n\le N}\frac{d^2}{d\vartheta^2}(\vartheta^2B_n'(\vartheta))\,\rho^n/(n-1) + \frac{d^2}{d\vartheta^2}(\vartheta^2B_0'(\vartheta))}{ \left(\eps -\sum_{2\le n\le N}\frac{d}{d\vartheta}\,(\vartheta^2B_n'(\vartheta))\,\rho^n/(n-1) + \frac{d}{d\vartheta}(\vartheta^2B_0'(\vartheta))\right)^2}\; \frac{\partial \vartheta}{\partial g}.
\end{equation*}
By combining our previous bounds, we can prove that 
\begin{equation*}
		\sum_{i=1}^{n}\left(|a_{i}|+\left|\frac{\partial a_{i}}{\partial g }\right|\right)(1+|p|) 
	 + |a | \le c_3(g)\,(1+|p|^2)
\end{equation*}
for some $c_3>0$ which depends on $\eps$ and is polynomial in $g$ and hence bounded whenever $g\leq M$.  We may perform again similar calculations for all $\partial a_{i }/\partial x_j$ which yields the bound~\eqref{bd2:a}. The same formula and the regularity of $\rho$ and $u$ ensures that $a_i$, $\partial a_i/\partial p_j$, $\partial a_i/\partial x_j$, and $\partial a_i/\partial f$ are H\"older continuous with respect to $t, x, g,$ and $p$. We can check the bound~\eqref{bd3:a} in the same manner.

This satisfies all assumptions of Theorem~\ref{exi:div} as long as we can ensure that $g> 0$. This follows from a straightforward maximum principle applied to any classical solution 
of~\eqref{div:form:g}: See the positivity part just below.

\medskip

\noindent {\it Positivity of $g$.} We note that we can rewrite \eqref{div:form:g} as
\[
\partial_t g+\div (g\,U)=\nabla_x\left(\kappa(\vartheta)\,\frac{\partial \vartheta}{\partial g}\right)\cdot \nabla g+\kappa(\vartheta)\,\frac{\partial \vartheta}{\partial g}\,\Delta g+S:\nabla u-\tilde P\,\div u,
\]
where
\[
U=u+\kappa(\vartheta)\,\nabla\rho\,\frac{\partial_\rho \vartheta}{g}.
\]
Remark that $S:\nabla u\geq 0$. Moreover $\vartheta=0$ when $g=0$ and for $0\leq g\leq 1$, we have that
\[
|\tilde P|\leq C\,\vartheta\leq \frac{C}{\eps}\,g,
\]
while by~\eqref{bd:the:rho}, we have still for $0\leq g\leq 1$ that
\[
\left|\frac{\partial\vartheta}{\partial\rho}\right|\leq C\,\vartheta^2\leq \frac{C}{\eps}\,g^2.
\]
This ensures that the solution $g$ and then $\vartheta$ are both strictly positive.
\end{proof}
\subsection{Existence of solutions $\vartheta$ such that the entropy defined by \eqref{entrop} satisfies the equation  \eqref{ENTROPY}.}
From the previous result, we may pass to the limit $\eps\to 0$ to obtain the following existence result.
\begin{theorem}
Assume that $\rho^0\in L^\gamma_x$, $\vartheta^0\in L^{\gamma_\vartheta}_x$. Assume moreover that $\rho\in L^\infty([0,\ T],\;L^\gamma(\Pi^d))$ and $u\in L^2([0,\ T],\;H^1(\Pi^d))$ and solve the continuity equation~\eqref{eq:00}. Then there exists $\vartheta\in L^\infty(0,T,\;L^{\gamma_\vartheta}(\Pi^d))\cap L^\alpha([0,\ T]\times\Pi^d)$ such that 
\begin{equation}
\sup_t \int_{\TT^d}  \vartheta^{\gamma_\vartheta}\,dx +\int_0^T \int_{\TT^d}  \kappa(\vartheta)\,\frac{|\nabla \vartheta|^2}{\vartheta^2}\,dx\,dt+\int_0^T \int_{\TT^d}  \frac{|\nabla u|^2}{\vartheta}\,dx\,dt\leq C(\|\rho\|_{L^\infty_t L^\gamma_x},\|u\|_{L^2_tH^1_x}),\label{aprioritheta}
\end{equation}
for some constant depending on the norms $(\|\rho\|_{L^\infty_t L^\gamma_x},\,\|u\|_{L^2_tH^1_x}$ and on the initial data $\rho^0$ and $\vartheta^0$). 
Furthermore, defining the entropy through the relation
\begin{equation}
\rho\,s=\rho\,\left(-\sum_{n=2}^N \tilde B_n(\vartheta)\,\frac{\rho^{n-1}}{n-1}+\frac{1}{\rho}\,\tilde B_0(\vartheta)\right), \quad \tilde B_n'(\vartheta)=\vartheta\,B_n''(\vartheta)+2\,B_n'(\vartheta),
\label{defentropy}
\end{equation}
then
\begin{equation}
  \|\rho\,s\|_{L^\infty_t L^1_x}+\|\rho s\,u\|_{L^1_{t,x}}\leq C(\|\rho\|_{L^\infty_t L^\gamma_x},\|u\|_{L^2_tH^1_x}),\label{entropybounded}
  \end{equation}
and $\rho\,s$ solves the following inequation in the sense of distributions
\begin{equation}
  \begin{split}
  &\partial_t(\rho\,s)+\div(\rho\,s\,u)\geq \nabla_x\left(\frac{\kappa(\vartheta)}{\vartheta}\,\nabla_x\vartheta\right) +\frac{1}{\vartheta}\,\cs:\nabla u+\kappa(\vartheta)\,\frac{|\nabla \vartheta|^2}{\vartheta^2}
\end{split}\label{eqslim}
  \end{equation}
with an initial condition satisfied weakly through the following inequality
\begin{equation}\label{inis}
 \rho s \vert_{t=0+} \ge \rho_0 s(\rho_0,\vartheta_0).
\end{equation}
Finally defining $g$ through the identities~\eqref{g}-\eqref{new:e}, we also have  the energy equality
\begin{equation}
\int_{\TT^d} g(t,x)\,dx=\int_{\TT^d} g^0(x)\,dx+\int_0^t \int_{\TT^d} (\cs:\nabla u-\tilde P\,\div u)(s,x)\,ds\,dx.\label{energyequality}
  \end{equation}\label{solvevartheta}
\end{theorem}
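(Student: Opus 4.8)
The proof carries out the program sketched in Section~4. One first replaces $\rho$ and $u$ by mollified, non--degenerate approximations (smooth in $(t,x)$ and bounded below away from vacuum), so that Theorem~\ref{exi:g} applies and produces classical solutions $g_\eps$ of~\eqref{app}; this extra regularization is removed at the very end by the same compactness mechanism used below, all estimates depending on $(\rho,u)$ only through $\|\rho\|_{L^\infty_tL^\gamma_x}$ and $\|u\|_{L^2_tH^1_x}$. The substantive part is the limit $\eps\to 0$. Integrating~\eqref{app} over $\TT^d$ makes the fluxes drop out and gives the $\eps$--version of the energy equality~\eqref{energyequality}, $\frac{d}{dt}\int_{\TT^d}g_\eps=\int_{\TT^d}(\cs:\nabla u-\pp_\eps\div u)$. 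Dividing the thermal balance for $g_\eps$ by $\vartheta_\eps>0$ (licit on classical solutions by the positivity part of Theorem~\ref{exi:g}) produces, up to an $O(\eps)$ source that is harmless and vanishes in the limit, the entropy identity of Section~3,
\begin{equation*}
\partial_t(\rho s_\eps)+\div(\rho s_\eps u)-\div\Bigl(\tfrac{\kappa(\vartheta_\eps)\nabla\vartheta_\eps}{\vartheta_\eps}\Bigr)=\tfrac1{\vartheta_\eps}\cs:\nabla u+\kappa(\vartheta_\eps)\tfrac{|\nabla\vartheta_\eps|^2}{\vartheta_\eps^2}\ \geq\ -O(\eps),
\end{equation*}
with $s_\eps$ the entropy attached to $\ee_\eps$. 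Integrating in space--time, using that $\rho$ solves~\eqref{eq:00} and that~\eqref{P7} forces $\rho s_\eps\le C(g_\eps+\rho^{\gamma-1}+1)$ as in Section~3, the two relations combine: the energy equality controls $\sup_t\int g_\eps$ once the work term $\int\!\!\int\pp_\eps\div u$ is handled, and that term splits by Young's inequality into a piece absorbed by the dissipation $\int\!\!\int|\nabla u|^2/\vartheta_\eps$ coming from the entropy identity and a piece bounded by $\|\rho\|_{L^\infty_tL^\gamma_x}$; a Gr\"onwall argument then closes the loop. Feeding the bound on $\sup_t\int g_\eps$ back into the integrated entropy identity yields~\eqref{aprioritheta} uniformly in $\eps$ (the $\eps$--terms $\eps\vartheta_\eps/\rho$ and $-\eps\vartheta_\eps\log\vartheta_\eps$ being lower order and favourable), hence, via~\eqref{con:ass}, uniform bounds on $\log\vartheta_\eps$ and $\vartheta_\eps^{\alpha/2}$ in $L^2_tH^1_x$, on $\vartheta_\eps$ in $L^\infty_tL^{\gamma_\vartheta}_x\cap L^\alpha_tL^{\alpha/(1-2/d)}_x$, and on $g_\eps$ in $L^\infty_tL^1_x$.

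\textbf{Compactness and passage to the limit.} From~\eqref{app}, $\partial_t g_\eps$ is bounded in $L^1_tW^{-k,1}_x$ for $k$ large, since the flux $g_\eps u-\kappa(\vartheta_\eps)\nabla\vartheta_\eps$ and the source $\cs:\nabla u-\pp_\eps\div u$ lie in fixed $L^1_{t,x}$. The key observation is that, on classical solutions, $\partial g_\eps/\partial\vartheta_\eps\ge \eps+C^{-1}\vartheta_\eps^{\gamma_\vartheta-1}$ \emph{uniformly in $\rho$ and $\eps$} (by~\eqref{der:g:the} together with~\eqref{P5},~\eqref{P2},~\eqref{P4}, the $\rho$--dependent terms having the good sign), so that $|g_\eps(t_2,x)-g_\eps(t_1,x)|\ge C^{-1}|\vartheta_\eps(t_2,x)-\vartheta_\eps(t_1,x)|^{\gamma_\vartheta}$; combined with the spatial $W^{1,q}_x$ bound on $\vartheta_\eps$ ($q>1$, from $\nabla\log\vartheta_\eps\in L^2_{t,x}$ and $\vartheta_\eps\in L^\alpha_{t,x}$), a degenerate Aubin--Lions--Simon argument gives, along a subsequence, $\vartheta_\eps\to\vartheta$ a.e.\ and in $L^r_{t,x}$ for all $r<\alpha$, whence $g_\eps\to g:=\rho\ee(\rho,\vartheta)$ and $\vartheta>0$ a.e.\ (as $\log\vartheta\in L^2_{t,x}$). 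One then passes to the limit in~\eqref{app}: the linear fluxes by strong/weak duality; $\pp_\eps\to\pp$ in $L^1_{t,x}$ by Vitali (a.e.\ convergence plus equi--integrability from the $L^{\alpha+}$ bound on $\vartheta_\eps$ and the bounds on $\rho$); and $\kappa(\vartheta_\eps)\nabla\vartheta_\eps=\nabla K(\vartheta_\eps)$, $K'=\kappa$, in $\mathcal D'$. Passing to the limit in the \emph{integrated} energy identity preserves the equality, giving~\eqref{energyequality}; the limit inherits~\eqref{aprioritheta} by lower semicontinuity, and~\eqref{entropybounded} follows by checking term by term in~\eqref{defentropy} that each $\rho^{n-1}\widetilde B_n(\vartheta)$ lies in a Lebesgue space dual to $L^2_tL^{2d/(d-2)}_x$, so that $\rho s\,u\in L^1_{t,x}$.

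\textbf{Entropy inequality and initial trace.} Here equality is genuinely lost. Testing the entropy identity against $\varphi\ge 0$ and letting $\eps\to 0$: the left side converges, since $\rho s_\eps\to\rho s$ in $L^1_{t,x}$ and $\kappa(\vartheta_\eps)\nabla\vartheta_\eps/\vartheta_\eps=\nabla\Psi(\vartheta_\eps)$ ($\Psi'=\kappa/\vartheta$) converges in $\mathcal D'$; but the nonnegative production is only lower semicontinuous. Writing $\kappa(\vartheta)|\nabla\vartheta|^2/\vartheta^2=|\nabla\Lambda(\vartheta)|^2$ with $\Lambda'=\sqrt{\kappa}/\vartheta$, one has $\Lambda(\vartheta_\eps)\to\Lambda(\vartheta)$ a.e.\ with $\nabla\Lambda(\vartheta_\eps)$ bounded in $L^2_{t,x}$, hence $\nabla\Lambda(\vartheta_\eps)\rightharpoonup\nabla\Lambda(\vartheta)$ and $\int\varphi|\nabla\Lambda(\vartheta)|^2\le\liminf\int\varphi|\nabla\Lambda(\vartheta_\eps)|^2$ by weak lower semicontinuity of the convex weighted $L^2$ functional; while $\liminf\int\varphi\,\vartheta_\eps^{-1}\cs:\nabla u\ge\int\varphi\,\vartheta^{-1}\cs:\nabla u$ by Fatou, the integrand being nonnegative since $\cs$ is Newtonian with $\mu>0$, $\lambda+2\mu/d>0$. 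This yields~\eqref{eqslim}. Finally $g_\eps(0)=g^0$ passes to the limit, so $g|_{t=0}=g^0$; and since in the above limit the entropy production can only be under--counted, the initial trace obeys $\rho s|_{t=0^+}\ge\rho_0 s(\rho_0,\vartheta_0)$, i.e.~\eqref{inis}.

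\textbf{Main obstacle.} The delicate step is the first one: establishing~\eqref{aprioritheta} \emph{uniformly in the regularization}. One must break the circularity between the energy equality --- which controls $\int g_\eps$ only up to the pressure work $\int\!\!\int\pp_\eps\div u$ --- and the entropy estimate --- which yields the dissipation $\int\!\!\int|\nabla u|^2/\vartheta_\eps$ only up to an upper bound on $\int g_\eps$; this is compounded by the degeneracy of the $g_\eps\!\leftrightarrow\!\vartheta_\eps$ relation near $\vartheta=0$, which is precisely what the modifications~\eqref{gtothetaeps}--\eqref{Peps} are designed to neutralize without corrupting the $\eps\to0$ limit. The second, subtler point is the lower--semicontinuity argument above, which is exactly why the conclusion is an entropy \emph{inequality}~\eqref{eqslim} paired with an energy \emph{equality}~\eqref{energyequality}.
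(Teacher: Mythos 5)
Your overall roadmap coincides with the paper's (mollify $\rho,u$, solve the $\eps$-regularized quasi-linear parabolic problem via Ladyzhenskaya--Solonnikov--Uraltseva, close the a priori estimates by combining the integrated $g_\eps$-balance with an entropy inequality, pass to the limit by Aubin--Lions and lower semicontinuity of the dissipation), but two steps as you have written them would not go through.

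\textbf{The commutator residual is not a harmless $O(\eps)$ term.} Because $\rho_\eps$ and $u_\eps$ are obtained by convolution they do not solve the continuity equation exactly: $\partial_t\rho_\eps+\div(\rho_\eps u_\eps)=R_\eps$ with $R_\eps\to 0$ in $L^2_tL^p_x$. When you pass from the $g_\eps$-balance to the entropy identity this produces an extra source
$R_\eps\,\bigl(s_\eps+\rho_\eps\,\partial_\rho s_\eps-\tfrac{1}{\vartheta_\eps}\,\partial_\rho g_\eps\bigr)$,
and the individual pieces are singular: $s_\eps$ contains $\tfrac{\eps}{\rho_\eps}\log\vartheta_\eps$, and $\tfrac{1}{\vartheta_\eps}\partial_\rho g_\eps$ apparently blows up near $\vartheta_\eps=0$. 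You cannot dismiss this as ``an $O(\eps)$ source that is harmless'': the paper has to observe the specific algebraic cancellation
$\tfrac{d}{d\vartheta}(\vartheta B_n'-\tilde B_n)=-B_n'$, which collapses the whole coefficient to $\sum_{n\ge2} B_n(\vartheta_\eps)\tfrac{n}{n-1}\rho_\eps^{n-1}$, eliminating both the $\log\vartheta_\eps$ and the $\tfrac1{\vartheta_\eps}$ singularities and making the term controllable by $\|\rho\|_{L^\infty_tL^\gamma_x}$ and $\|\vartheta_\eps\|_{L^2_tL^{\bar\alpha}_x}$ via~\eqref{P6bis}. Without this cancellation the uniform bound~\eqref{aprioritheta} would not be provable, since $R_\eps$ is only small in $L^2_tL^p_x$ and would be tested against an a priori unbounded weight.

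\textbf{The pointwise inequality used to transfer compactness from $g_\eps$ to $\vartheta_\eps$ is false as stated.} You write $|g_\eps(t_2,x)-g_\eps(t_1,x)|\ge C^{-1}|\vartheta_\eps(t_2,x)-\vartheta_\eps(t_1,x)|^{\gamma_\vartheta}$, but $g_\eps$ depends on $(\rho_\eps,\vartheta_\eps)$ jointly, and $\rho_\eps$ is not frozen between $t_1$ and $t_2$. The monotonicity bound $\partial_{\vartheta}g_\eps\ge\eps+C^{-1}\vartheta^{\gamma_\vartheta-1}$ only gives injectivity of $\vartheta\mapsto g_\eps(\rho,\vartheta)$ at \emph{fixed} $\rho$. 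The correct route (as in the paper) is: obtain a.e. convergence of $g_\eps$ by Aubin--Lions, recall that $\rho_\eps\to\rho$ a.e.\ from the mollification, and \emph{then} invert the map $\vartheta\mapsto g(\rho,\vartheta)$ pointwise at the limiting $\rho(t,x)$ to deduce a.e.\ convergence of $\vartheta_\eps$. The conclusion is the same, but your stated inequality does not hold and would not survive a referee. Finally, ``a Gr\"onwall argument then closes the loop'' is not quite what happens: the closure of~\eqref{aprioritheta} relies on the Poincar\'e interpolation~\eqref{Poincare} together with the algebraic conditions $\beta<1/2$ and $\bar\alpha\le 2\gamma_\vartheta$, which make the exponent $\beta\bar\alpha/\gamma_\vartheta<1$; it is a self-bounding (sublinear absorption) estimate, not a Gr\"onwall iteration.
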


\begin{proof}
  The strategy of the proof is straightforward. Given $u\in L^2_t H^1_x$, we construct $u_\eps\in C^\infty$ that converges to $u$ strongly in $L^2_t H^1_x$. For simplicity, we consider here an approximation by convolution. Given $\rho\in L^\infty_t L^\gamma_x$, we construct $\rho_\eps\in C^\infty$ by convoluting through the same kernel, uniformly bounded in $L^\infty_t L^\gamma_x$ and converging to $\rho$ in $L^p_t L^\gamma_x$ for every $p<\infty$. Observe that the standard commutator estimate implies that
  \[
  \partial_t\rho_\eps+\div(\rho_\eps\,u_\eps)=R_\eps,
  \]
  where $R_\eps\to 0$ in $L^2_t L^p_x$ with $1/p=1/2+1/\gamma$. 
 
We also choose $\vartheta_\eps^0=\eps+\vartheta^0$ which is uniformly in $L^{\gamma_\vartheta}_x$.  For any fixed $\eps>0$, we then obtain a classical solution $g_\eps$ to~\eqref{div:form:g} with \eqref{aieps}-\eqref{a0eps}. We then have to pass to the limit as $\eps\to 0$ in the system.

\medskip

\noindent {\it Uniform bounds.}
  The critical point is hence to derive appropriate estimates on $g_\eps$ and $\vartheta_\eps$ that are uniform in $\eps$. This is naturally based on  equivalent energy and entropy estimates.
  To start with the energy, by directly integrating the equation on $g_\eps$ first in space and then in time, we obtain 
\[
\int_{\TT^d} g_\eps(t,x)\,dx=\int_{\TT^d} g_\eps^0(x)\,dx+\int_0^t\int_{\TT^d} (\cs_\eps:\nabla u_\eps-\tilde P_\eps\,\div u_\eps)(s,x)\,dx\,ds.
\]
From our assumptions on the initial conditions $\rho^0$ and $\vartheta^0$, we  have uniform bounds on $g_\eps^0$ in $L^1$. Indeed for any $n$, $(\rho_\eps^0)^{n}$ converges to $(\rho^0)^n$ strongly in $L^{\gamma/n}_x$. From assumption~\eqref{P6}, $(\vartheta_\eps^0)^2\,B_n'(\vartheta_\eps^0)$ converges strongly in $L^{\gamma/(\gamma-n)}$. Since $\frac{\gamma-n}{\gamma}+\frac{n}{\gamma}=1$, we directly obtains that $g_\eps^0$ converges to $g^0$ strongly in $L^1_x$ and that it is uniformly bounded in $L^1_x$.

Moreover by convexity of the $H^1$ norm, we also have that
\[
\int_0^t\int_{\TT^d} \cs_\eps:\nabla u_\eps\,dx\,ds\leq \|u\|_{L^2_t H^1_x}^2.
\]
This yields that
\begin{equation}
\begin{split}
\int_{\TT^d} 
g_\eps(t,x)\,dx \leq &C+\|u\|_{L^2_t H^1_x}^2+\|u\|_{L^2_t H^1_x}\,\|\tilde P_\eps\|_{L^2_{t,x}}.\\
\end{split}\label{apriorigeps}
  \end{equation}
It remains to control the norm in the right-hand side. From the definition of $\tilde P_\eps$, we have that
\[
\|\tilde P_\eps\|_{L^2_{t,x}}\leq \sum_{n=0}^N \|\vartheta_\eps\,B_n(\vartheta_\eps)\,\rho_\eps^n\|_{L^2_{t,x}}+\eps\,\|\vartheta_\eps\,\log \vartheta_\eps\|_{L^2_{t,x}}.
\]
From the $L^\infty_t L^\gamma_x$ bound on $\rho$, this implies that
\[
\|\tilde P_\eps\|_{L^2_{t,x}}\leq \sum_{n=0}^N \|\vartheta_\eps\,B_n(\vartheta_\eps)\|_{L^2_t L^{q_n}_x}\,\|\rho_\eps\|_{L^\infty_{t} L^\gamma_x}^{n}+\eps\,\|\vartheta_\eps\,\log \vartheta_\eps\|_{L^2_{t,x}},
\]
with $1/q_n=1/2-n/\gamma$ or $q_n=2\,\gamma/(\gamma-2n)$. 

We may now use assumption~\eqref{P6bis} to further bound
\[
\|\vartheta_\eps\,B_n(\vartheta_\eps)\|_{L^2_t L^{q_n}_x}\leq C+C\,\|\vartheta_\eps\|_{L^2_t L^{\bar\alpha}_x}^{\bar\alpha\,\frac{\gamma-2n}{2\,\gamma}}. 
\]
This lets us deduce that for some $\beta<1/2$,
\[
\sup_t \int_{\TT^d} g_\eps(t,x)\,dx\leq C(\|\rho\|_{L^\infty_t L^\gamma_x},\|u\|_{L^2_tH^1_x})\,(1+\|\vartheta_\eps\|_{L^2_t L^{\bar\alpha}_x}^{\beta\,\bar\alpha}).
\]
For further use in a later section, we also note that we have the more precise estimate
\begin{equation}
\int_0^T\int_{\TT^d} |\tilde P_\eps|\,|\div u_\eps|\,dx\,dt\leq \|u\|_{L^2_tH^1_x}\,\left(\frac{\|\rho\|_{L^\infty_t L^\gamma_x}^{\gamma/2}}{C}+C\,\|\vartheta_\eps\|_{L^2_t L^{\bar\alpha}_x}^{\bar\alpha/2}\right).\label{Pdivu}
\end{equation}
From the definition of $g_\eps$, we also have that
\[
\int_{\TT^d} g_\eps(t,x)\,dx\geq \frac{1}{C}\,\int \vartheta^{\gamma_\theta}\,dx -\sum_{n=2}^N \|\vartheta_\eps^2\,B_n'(\vartheta_\eps)\,\rho_\eps^n\|_{L^\infty_t L^1_x}. 
\]
Using again the $L^\infty_t L^\gamma_x$ bound on $\rho$, this implies that
\[
\sum_{n=2}^N \|\vartheta_\eps^2\,B_n'(\vartheta_\eps)\,\rho_\eps^n\|_{L^\infty_t L^1_x}\leq \sum_{n=0}^N \|\vartheta_\eps^2\,B_n'(\vartheta_\eps)\|_{L^\infty_t L^{\tilde q_n}_x}\,\|\rho_\eps\|_{L^\infty_{t} L^\gamma_x}^{n},
\]
with $1/\tilde q_n=1-n/\gamma$. 

Now use assumption~\eqref{P6} to obtain, again for some $\tilde\beta<1$,
\[
\|\vartheta_\eps^2\,B_n'(\vartheta_\eps)\|_{L^\infty_t L^{\tilde q_n}_x}\leq C\,\|\vartheta_\eps\|_{L^\infty_t L^{\gamma_\vartheta}_x}^{\tilde\beta\,\gamma_{\vartheta}\,(\gamma-n)/\gamma}. 
\]
This shows that, again for some $\tilde \beta<1$,
\[
\sup_t \int_{\TT^d} g_\eps(t,x)\,dx\geq  \frac{1}{C}\,\sup_t\,\int_{\TT^d} \vartheta_\eps^{\gamma_\theta}\,dx- C\|\rho\|_{L^\infty_t L^\gamma_x}^\gamma\, -C\|\vartheta_\eps\|_{L^\infty_t L^{\gamma_\vartheta}_x}^{\tilde\beta\,\gamma_{\vartheta}}.
\]
For further use, we even have the more precise estimate
\begin{equation}
\sup_t \int_{\TT^d} g_\eps(t,x)\,dx\geq \frac{1}{C}\,\sup_t\,\int_{\TT^d} \vartheta_\eps^{\gamma_\theta}\,dx-C-\frac{1}{C}\,\sup_t\int_{\TT^d} \rho^\gamma\,dx.\label{lowerboundg}
  \end{equation}
Therefore, inserting those estimates into~\eqref{apriorigeps} yields that 
\begin{equation}\label{supttheta}
\sup_t\,\int_{\TT^d} \vartheta_\eps^{\gamma_\theta}\,dx\leq C+C\,\sup_t \int_{\TT^d} g_\eps(t,x)\,dx\leq C+C(\|\rho\|_{L^\infty_t L^\gamma_x},\|u\|_{L^2_tH^1_x})\,(1+\|\vartheta_\eps\|_{L^2_t L^{\bar\alpha}_x}^{\beta\,\bar\alpha}). 
\end{equation}
On its own, we cannot obtain a priori estimates just from~\eqref{supttheta} and we need also an entropy bound.
  Since for $\eps>0$, $g_\eps$ and $\vartheta_\eps$ are smooth and $\vartheta_\eps>0$, we can define
  \[
s_\eps=\frac{\eps}{\rho_\eps}\,\log \vartheta_\eps-\sum_{n=2}^N \tilde B_n(\vartheta_\eps)\,\frac{\rho_\eps^{n-1}}{n-1}+\frac{1}{\rho_\eps}\,\tilde B_0(\vartheta_\eps), 
\]
with $d \tilde B_n(\vartheta)/d\vartheta=\vartheta\,B_n''(\vartheta)+2\,B_n'(\vartheta)$ so that $\vartheta_\eps\,\frac{\partial s_\eps}{\partial \vartheta}=\frac{\partial \tilde e_\eps}{\partial \vartheta}$.

  Note that
  \[
\partial_t g_\eps=\frac{\partial g_\eps}{\partial\vartheta}\,\partial_t\vartheta_\eps+\frac{\partial g_\eps}{\partial\rho}\,\partial_t\rho_\eps,
  \]
  where we recall that
  \[
\frac{\partial g_\eps}{\partial\vartheta}=\eps-\sum_{2\le n\le N}\frac{d}{d \vartheta}(\vartheta\,^2B_n'(\vartheta_\eps))\frac{\rho^n}{n-1} + \frac{d}{d \vartheta}(\vartheta^2B_0'(\vartheta_\eps))>0,
\]
for any $\eps>0$ and for $g_\eps=\rho_\eps\,\tilde e_\eps$,
\[
\frac{\partial \tilde e_\eps}{\partial\rho}=-\eps\,\frac{\vartheta_\eps}{\rho_\eps^2}-\sum_{n=2}^N \vartheta_\eps^2\,B_n'(\vartheta_\eps)\,\rho_\eps^{n-2} -\vartheta_\eps^2\,B_0'(\vartheta_\eps)\,\frac{1}{\rho_\eps^2}.
\]
This lets us write that
\[\begin{split}
&\frac{\partial g_\eps}{\partial\vartheta}\,(\partial_t \vartheta_\eps+u_\eps\cdot \nabla_x\vartheta_\eps)=-g_\eps\,\div u_\eps-\frac{\partial g_\eps}{\partial\rho}\,(\partial_t \rho_\eps+u_\eps\cdot \nabla_x\rho_\eps)-\tilde P_\eps\,\div u_\eps\\
&\qquad\qquad+\cs_\eps:\nabla u_\eps+\nabla_x(\kappa(\vartheta_\eps)\,\nabla_x\vartheta_\eps).
\end{split}
\]
Since we have kept the critical relation,
\[
\tilde P_\eps=\rho^2\,\frac{\partial \tilde e_\eps}{\partial\rho}+\vartheta\,\frac{\partial \tilde P_\eps}{\partial\vartheta},
\]
this yields
\[\begin{split}
&\frac{\partial g_\eps}{\partial\vartheta}\,(\partial_t \vartheta_\eps+u_\eps\cdot \nabla_x\vartheta_\eps)=-\frac{\partial g_\eps}{\partial\rho}\,R_\eps-\vartheta_\eps\,\frac{\partial\tilde P_\eps}{\partial \vartheta}\,\div u_\eps\\
&\qquad\qquad+\cs_\eps:\nabla u_\eps+\nabla_x(\kappa(\vartheta_\eps)\,\nabla_x\vartheta_\eps).
\end{split}
\]
Because the critical relation above also implies that
\[
\frac{\partial s_\eps}{\partial\rho}=-\frac{1}{\rho^2}\,\frac{\partial \tilde P_\eps}{\partial\vartheta},
\]
we finally deduce that
\begin{equation}
  \begin{split}
  &\partial_t(\rho_\eps\,s_\eps)+\div(\rho_\eps\,s_\eps\,u_\eps)= \nabla_x\left(\frac{\kappa(\vartheta_\eps)}{\vartheta_\eps}\,\nabla_x\vartheta_\eps\right)\\
&\qquad\qquad    R_\eps\,\left(s_\eps+\rho_\eps\,\frac{\partial s_\eps}{\partial\rho}-\frac{1}{\vartheta_\eps}\,\frac{\partial g_\eps}{\partial\rho}\right)+\frac{1}{\vartheta_\eps}\,\cs_\eps:\nabla u_\eps+\kappa(\vartheta_\eps)\,\frac{|\nabla \vartheta_\eps|^2}{\vartheta_\eps^2}.
\end{split}\label{eqseps}
  \end{equation}
The first key point to make use of \eqref{eqseps} is that we still have that $\rho_\eps\,s_\eps\leq C\,g_\eps$. Hence
\[
\begin{split}
  \int_0^T \int_{\TT^d} \kappa(\vartheta_\eps)\,\frac{|\nabla \vartheta_\eps|^2}{\vartheta_\eps^2}\,dx\,dt\leq &C\,\int_{\TT^d} g_\eps(t=T,x)\,dx-\int_{\TT^d} \rho_\eps^0\,s_\eps^0(x)\,dx\\
  &+\|R_\eps\|_{L^2_t L^p_x}\,\left\|s_\eps+\rho_\eps\,\frac{\partial s_\eps}{\partial\rho}-\frac{1}{\vartheta_\eps}\,\frac{\partial g_\eps}{\partial\rho}\right\|_{L^2_t L^{p^*}_x}.
\end{split}
\]
We may hence immediately use~\eqref{supttheta} together with the $H^1$ estimate on $u_\eps$. We note that, just as for $g_\eps^0$, we have initial uniform bounds on  $\rho_\eps^0\,s^0_\eps$ in $L^1$. Indeed since $\vartheta_\eps^0=\vartheta^0+\eps$, and $\vartheta^0\in L^{\gamma_\vartheta}$ then
\[
\|\eps\,\log \vartheta^0_\eps\|_{L^1}\leq \eps\,|\log \eps|+\eps\,\|\vartheta^0\|_{L^1}\to 0,\quad \mbox{as}\ \eps\to 0.
\]
Moreover, again using assumption~\eqref{P6}, we also have the strong convergence in $L^1$ of $\tilde B_n(\vartheta_\eps^0)\,(\rho_\eps^0)^{n}$ just as for $g_\eps^0$. Consequently we have the strong convergence in $L^1_x$ of $\rho_\eps^0\,s_\eps^0$ to $\rho^0\,s^0$ with a uniform bound in $\eps$ which allows to derive
\begin{equation}
\begin{split}
  \int_0^T \int_{\TT^d} \kappa(\vartheta_\eps)\,\frac{|\nabla \vartheta_\eps|^2}{\vartheta_\eps^2}\,dx\,dt\leq &C+\|u\|_{L^2_t H^1_x}^2+C(\|\rho\|_{L^\infty_t L^\gamma_x},\|u\|_{L^2_tH^1_x})\,(1+\|\vartheta_\eps\|_{L^2_t L^{\bar\alpha}_x}^{\beta\,\bar\alpha})\\
  &+\|R_\eps\|_{L^2_t L^p_x}\,\left\|s_\eps+\rho_\eps\,\frac{\partial s_\eps}{\partial\rho}-\frac{1}{\vartheta_\eps}\,\frac{\partial g_\eps}{\partial\rho}\right\|_{L^2_t L^{p^*}_x}.
\end{split}\label{combinedapriori}
  \end{equation}
Our second critical point is that we have a simplified expression
\[
s_\eps+\rho_\eps\,\frac{\partial s_\eps}{\partial\rho}-\frac{1}{\vartheta_\eps}\,\frac{\partial g_\eps}{\partial\rho}=\sum_{n=2}^N (\vartheta_\eps\,B_n'-\tilde B_n)\,\frac{n}{n-1}\,\rho_\eps^{n-1},
\]
where the $\log \vartheta_\eps$ term vanish. In particular this expression is smooth around $\vartheta_\eps=0$ and only blows up as $\vartheta_\eps\to\infty$.

Recalling the definition of $\tilde B_n$, we note that
\[
\frac{d}{d\vartheta} (\vartheta\,B_n'-\tilde B_n)=B_n'+\vartheta\,B_n''-\vartheta\,B_n''-2B_n'=-B_n',
\]
so that in the end
\[
s_\eps+\rho_\eps\,\frac{\partial s_\eps}{\partial\rho}-\frac{1}{\vartheta_\eps}\,\frac{\partial g_\eps}{\partial\rho}=\sum_{n=2}^N B_n(\vartheta_\eps)\,\frac{n}{n-1}\,\rho_\eps^{n-1}.
\]
Using the $L^\infty_t L^\gamma_x$ bound on $\rho$, this implies that
\[
\left\|s_\eps+\rho_\eps\,\frac{\partial s_\eps}{\partial\rho}-\frac{1}{\vartheta_\eps}\,\frac{\partial g_\eps}{\partial\rho}\right\|_{L^2_t L^{p^*}_x}\leq C\, \sum_{n=2}^N \left\|B_n(\vartheta_\eps)\,\rho_\eps^{n-1}\right\|_{L^2_t L^{p^*}_x}\leq C\,\sum_{n=2}^N \|\rho_\eps\|_{L^\infty_t L^\gamma_x}^{n-1}\,\left\|B_n(\vartheta_\eps)\right\|_{L^2_t L^{p_n}_x},
\]
with
\[
p_n=\frac{1}{p^*}-\frac{n-1}{\gamma}=1-\frac{1}{p}-\frac{n-1}{\gamma} =\frac{1}{2}-\frac{1}{\gamma}-\frac{n-1}{\gamma} =\frac{1}{2}-\frac{n}{\gamma}=q_n, 
\]
which was the exponent defined earlier when controlling the norm of $\tilde P_\eps$ in $L^2$.

Therefore this term can be bounded in exactly the same way through assumption~\eqref{P6bis}, yielding
\[
\left\|s_\eps+\rho_\eps\,\frac{\partial s_\eps}{\partial\rho}-\frac{1}{\vartheta_\eps}\,\frac{\partial g_\eps}{\partial\rho}\right\|_{L^2_t L^{p^*}_x}\leq C+C(\|\rho\|_{L^\infty_tL^\gamma_x})\, (1+\|\vartheta_\eps\|_{L^2_t L^{\bar\alpha}_x}^{\beta\,\bar\alpha}).
\]
This let us obtain by adding \eqref{combinedapriori} and \eqref{supttheta} that
\begin{equation}
\sup_t \int_{\TT^d} \vartheta_\eps^{\gamma_\vartheta}\,dx + \int_0^T \int_{\TT^d} \kappa(\vartheta_\eps)\,\frac{|\nabla \vartheta_\eps|^2}{\vartheta_\eps^2}\,dx\,dt\leq C+C(\|\rho\|_{L^\infty_t L^\gamma_x},\|u\|_{L^2_tH^1_x})\,(1+\|\vartheta_\eps\|_{L^2_t L^{\bar\alpha}_x}^{\beta\,\bar\alpha}).
\label{combinedapriori2}
  \end{equation}
We are now ready to conclude our a priori estimates. By assumption~\eqref{con:ass} and Poincar\'e inequalities,
\[
\int_{\TT^d} \vartheta_\eps^{\bar\alpha}\,dx\leq \left(\int_{\TT^d} \vartheta_\eps^{\gamma_\vartheta}\,dx\right)^{\bar\alpha/\gamma_\vartheta}+\int_{\TT^d} \kappa(\vartheta_\eps)\,\frac{|\nabla \vartheta_\eps|^2}{\vartheta_\eps^2}\,dx,
\]
so that, since $\alpha\geq 2$,
\begin{equation}
\|\vartheta_\eps\|_{L^2_t L^{\bar\alpha}_x}^{\beta\,\bar\alpha}\leq C\,T^{\beta\,\bar\alpha/2}\,\left(\sup_t \int_{\TT^d} \vartheta_\eps^{\gamma_\vartheta}\,dx\right)^{\beta\,\bar\alpha/\gamma_\vartheta}+C_T\,\left(\int_0^T \int_{\TT^d} \kappa(\vartheta_\eps)\,\frac{|\nabla \vartheta_\eps|^2}{\vartheta_\eps^2}\,dx\,dt\right)^\beta.\label{Poincare}
\end{equation}
Because $\beta<1/2$ and $\bar\alpha\leq 2\,\gamma_\vartheta$, this inequality is enough to bound the right-hand side of \eqref{combinedapriori2} in terms of its left-hand side. Hence we eventually have the uniform in $\eps$ estimates
\begin{equation}
\sup_\eps\sup_t \int_{\TT^d} \vartheta_\eps^{\gamma_\vartheta}\,dx +\sup_\eps \int_0^T \int_{\TT^d} \kappa(\vartheta_\eps)\,\frac{|\nabla \vartheta_\eps|^2}{\vartheta_\eps^2}\,dx\,dt\leq C(\|\rho\|_{L^\infty_t L^\gamma_x},\|u\|_{L^2_tH^1_x}).\label{aprioriepsuniform}
\end{equation}
Those bounds directly imply that $\vartheta\in L^\infty([0,\ T],\;L^{\gamma_\vartheta}(\Pi^d))\cap L^\alpha([0,\ T]\times\Pi^d)$ as claimed. 
Because $\kappa(\vartheta)\geq 1$, \eqref{aprioriepsuniform} also shows that
\begin{equation}
  \begin{split}
    &\sup_\eps \int_0^T \int_{\TT^d}  |\nabla \log \vartheta_\eps|^2\,dx\,dt\leq C(\|\rho\|_{L^\infty_t L^\gamma_x},\|u\|_{L^2_tH^1_x}),\\
    &\mbox{and}\ \sup_\eps \int_0^T \int_{\TT^d} |\log \vartheta_\eps|^2\,dx\,dt\leq C(\|\rho\|_{L^\infty_t L^\gamma_x},\|u\|_{L^2_tH^1_x}),
  \end{split}
  \label{aprioriepsuniform2}
\end{equation}
again by Poincar\'e inequality.

To conclude those a priori estimates, note that we finally have that
\begin{equation}
\sup_\eps\int_0^T \int _{\TT^d}\frac{|\nabla u|^2}{\vartheta}\,dx\,dt\leq C(\|\rho\|_{L^\infty_t L^\gamma_x},\|u\|_{L^2_tH^1_x}).\label{aprioriepsuniform3}
  \end{equation}

\medskip

\noindent {\it Limit passage $\varepsilon \to 0$.}
We can now send $\eps\rightarrow0$ to get a weak solution of~\eqref{div:form:g}.
From our previous estimates, we know that $g_\eps$ is uniformly bounded in $L^\infty_t L^p_x$ for some $p>1$. This lets us extract a sub-sequence, still denoted $g_\eps$, that converges weak-* to some $g$ in $L^\infty_t L^p_x$ for some $p>1$.

To derive the compactness on $\vartheta_\eps$ through the classical Aubin-Lions approach, we require controls on $g_\eps\,u_\eps$ and $\rho_\eps\,s_\eps\,u_\eps$. We may bound directly by Sobolev embeddings
\[
\|g_\eps\,u_\eps\|_{L^1_{t,x}}\leq \|u_\eps\|_{L^2_t H^1_x}\,\|g_\eps\|_{L^2_{t,x}}.
\]
It is straightforward to bound the $L^2$ norm of $g_\eps$ in the same manner as we bounded the $L^2$ norm of $\tilde P_\eps$ earlier: Assumption~\eqref{P6bis} indeed implies the same behavior for $\vartheta^2\,B_n'(\vartheta)$ and $\vartheta\,B_n(\vartheta)$.

For further use, we also observe that by using the $\eps>0$ in \eqref{P6bis}, we may use some interpolation on $\vartheta_\eps$ between $L^\infty_t L^{\gamma_\vartheta}_x$ and $L^2_t L^{\bar \alpha}_x$, leading actually to
\begin{equation}
  \|g_\eps\,u_\eps\|_{L^p_{t,x}}\leq C(\|\rho\|_{L^\infty_t L^\gamma_x},\|u\|_{L^2_tH^1_x}),
  \label{Lpflux1}
\end{equation}
for some $p>1$. The same applies to $\tilde P_\eps$ so that we also have that
\begin{equation}
  \|\tilde P_\eps\|_{L^q_{t,x}}\leq C(\|\rho\|_{L^\infty_t L^\gamma_x},\|u\|_{L^2_tH^1_x}),
  \label{LqPeps}
\end{equation}
for some $q>1$.

A similar discussion applies to $\rho_\eps\,s_\eps$, with in fact much simpler estimates. First of all $\tilde B_n$ behaves like $\vartheta\, B_n''+2\,B_n'$ instead of $\vartheta^2\,B_n'(\vartheta)$ so that the coefficients $n\geq 2$ in the expansion are easier to handle than for $g_\eps$. Secondly, the $\eps\,\log \vartheta_\eps$ in $\rho_\eps\,s_\eps$ is immediately bounded by \eqref{aprioriepsuniform2}. Hence we also have that
\begin{equation}
  \|\rho_\eps\, s_\eps\,u_\eps\|_{L^p_{t,x}}\leq C(\|\rho\|_{L^\infty_t L^\gamma_x},\|u\|_{L^2_tH^1_x}),
  \label{Lpflux2}
\end{equation}
for some $p>1$.

We now turn to the compactness argument. We may extract a subsequence $\vartheta_\eps$, converging weak-* to $\vartheta$ in $L^\infty_t L^{\gamma_\vartheta}_x$. Furthermore by~\eqref{aprioriepsuniform} it follows that $\vartheta_\eps$ is compact in space. Since $\rho_\eps$ is also compact in space, the definition~\eqref{gtothetaeps} of $g_\eps$ together with our a priori estimates directly implies that $g_\eps$ is compact in space. For similar reasons, $\rho_\eps\,s_\eps$ is compact in space.

We now obtain from Equation~\eqref{app} and Equation~\eqref{eqseps}  that both $\partial_tg_\eps$ and $\partial_t(\rho_\eps\,s_\eps)$ are bounded in $L^1_t W^{-1,1}_x$ thanks to \eqref{Lpflux1}-\eqref{Lpflux2} and our previous a priori estimates. By Aubin-Lions, this shows that $g_\eps$ and $\rho_\eps\,s_\eps$ are compact in $L^1_{t,x}$.

Upon further extraction, we may therefore assume that both $g_\eps$ and $\rho_\eps\,s_\eps$ converge pointwise $a.e.$ respectively to $g$ and some $S$. Of course $\rho_\eps$ converges $a.e.$ to $\rho$. By assumptions~\eqref{P2} and \eqref{P5}, $\partial_{\vartheta_\eps} g_\eps\geq 0$ and more precisely $\partial_{\vartheta_\eps} g_\eps$ is uniformly away from $0$ for $\vartheta_\eps>\underline{\vartheta}$ for any $\underline{\vartheta}>0$. This proves that for a fixed value of $\rho_\eps(t,x)$, $g_\eps=g_\eps(\rho_\eps,\vartheta_\eps)$ is one-to-one in $\vartheta_\eps$.

The pointwise convergence of $g_\eps$ therefore implies the pointwise convergence of $\vartheta_\eps$ to some $\vartheta$, and hence the compactness and convergence of $\vartheta_\eps$ to $\vartheta$ in $L^1_{t,x}$. A first consequence is that we may pass to the limit in~\eqref{gtothetaeps} and obtain that the limits $\rho$, $\vartheta$ and $g$ solve~\eqref{new:e}.  Similarly $\rho$, $\vartheta$ and $\rho\,s$ solve~\eqref{defentropy}.

\medskip

\noindent {\it Energy equation \eqref{energyequality}.}
It remains to pass to the limit in the integral of Equation~\eqref{app} on $g_\eps$ and in Equation~\eqref{eqseps}. Since $u_\eps$ is converging  $a.e.$ to $u$, we have the $a.e.$ convergence of $g_\eps\,u_\eps$ and $\rho_\eps\,s_\eps\,u_\eps$ to respectively $g\,u$ and $\rho\,s\,u$. By the equi-integrability provided by~\eqref{Lpflux1} and~\eqref{Lpflux2}, we can apply dominated convergence and obtain the strong convergence of $g_\eps\,u_\eps$ and $\rho_\eps\,s_\eps\,u_\eps$.

Obviously we directly have the strong convergence of $\cs_\eps:\nabla u_\eps$. We also have pointwise convergence inside the formula~\eqref{Peps} defining $\tilde P_\eps$ so that $\rho$, $\vartheta$ and $\tilde P$ satisfy~\eqref{new:p}. By~\eqref{LqPeps}, we hence have that $\tilde P_\eps$ converges to $\tilde P$ in $L^2_{t,x}$, again by dominated convergence. Since $\div u_\eps$ converges strongly to $\div u$ in $L^2_{t,x}$, this yields the convergence of $\tilde P_\eps\,\div u_\eps$.
It is now possible to integrate Equation~\eqref{app} and pass to the limit in all resulting terms to obtain the claimed energy equality~\eqref{energyequality}.  

\noindent {\it Entropy inequation \eqref{eqslim}.}
It remains to derive the limit of Equation~\eqref{eqseps} on $\rho_\eps\,s_\eps$. Our previous analysis shows that
\[
R_\eps\,\left(s_\eps+\rho_\eps\,\frac{\partial s_\eps}{\partial\rho}-\frac{1}{\vartheta_\eps}\,\frac{\partial g_\eps}{\partial\rho}\right)\longrightarrow 0,
\]
strongly as $\eps\to 0$.

We can also prove 
that $\div\left(\frac{\kappa(\vartheta_\eps)}{\vartheta_\eps}\,\nabla\vartheta_\eps \right) \rightarrow \div\left(\frac{\kappa(\vartheta)}{\vartheta}\,\nabla\vartheta\right)$ in the distribution sense. Denoting $\tilde \kappa(\vartheta)$ s.t. $\tilde\kappa'=\frac{\kappa(\vartheta)}{\vartheta}$, we note that
\[
\frac{\kappa(\vartheta_\eps)}{\vartheta_\eps}\,\nabla\vartheta_\eps=\nabla(\tilde \kappa(\vartheta_\eps)).
\]
As before $\tilde \kappa(\vartheta_\eps)$ converges $a.e.$ to $\tilde \kappa(\vartheta)$. By assumption~\eqref{con:ass}, $|\tilde \kappa(\vartheta)|\leq C\,(\log\vartheta+\vartheta^{\alpha})$. On the other hand, by combining~\eqref{con:ass} and~\eqref{aprioriepsuniform}, we also have that
\[
\sup_\eps \int_0^T \int_{\TT^d} \vartheta_\eps^{\alpha-2}\,|\nabla\vartheta_\eps|^2\,dx\,dt=\sup_\eps \int_0^T \int_{\TT^d} |\nabla\vartheta_\eps^{\alpha/2}|^2\,dx\,dt. 
\]
By Sobolev embedding, we have that $\vartheta_\eps$ is uniformly bounded in $L^\alpha_t L^{\alpha\,2^*/2}_x$ with $1/2^*=1/2-1/d$ (or $2^*<\infty$ for $d=2$). By interpolation with the uniform bound in $L^{\infty}_t L^{\gamma_\vartheta}_x$, we obtain a uniform bound for $\vartheta_\eps$ in $L^p_{t,x}$ for some $p>\alpha$. As a consequence $\tilde \kappa(\vartheta_\eps)$ is equi-integrable and converges strongly in $L^1_{t,x}$ to $\tilde \kappa(\vartheta)$, proving the required limit.

It is important to highlight that the same argument would not apply to 
the limit of $\div\left(\kappa(\vartheta_\eps)\,\nabla\vartheta_\eps \right)$. Any anti-derivative of $\kappa(\vartheta)$ behaves like $\vartheta^{\alpha+1}$ as $\vartheta\to \infty$. Therefore it would not in general be possible to control it through our a priori estimates. This is the main objection that prevents us from passing to the limit in the whole equation~\eqref{app} for $g_\eps$.

We are also not able to pass to the limit in the two remaining terms in the right-hand side of Equation~\eqref{eqseps}. We have for example the $a.e.$ convergence of $\frac{1}{\vartheta_\eps}\,\cs_\eps:\nabla u_\eps$ but we cannot prove equi-integrability, as $\frac{1}{\vartheta_\eps}$ could be large. However we can obtain inequalities which lead to the limiting inequation.

We recall that the function $(a,b)\to \frac{a^2}{b}$ is jointly convex in $(a,b)$. Consequently if some functions $a_n,\,b_n$ converge to functions $a,\,b$ in $L^1$ (or even in some appropriate weak topology) then $\frac{a^2}{b}\leq \liminf \frac{a^2_n}{b_n}$.  This immediately implies that
\[
\frac{1}{\vartheta}\,\cs:\nabla u\leq \liminf_{\eps\to 0}\frac{1}{\vartheta_\eps}\,\cs_\eps:\nabla u_\eps.
\]
Second by denoting $\bar\kappa(\vartheta)$ s.t. $\bar\kappa'=(\kappa(\vartheta))^{1/2}/\vartheta$, we have that
\[
\kappa(\vartheta_\eps)\,\frac{|\nabla\vartheta_\eps|^2}{\vartheta_\eps^2} =|\nabla\bar\kappa(\vartheta_\eps)|^2.
\]
Therefore, we also have that
\[
\kappa(\vartheta)\,\frac{|\nabla\vartheta|^2}{\vartheta^2}\leq \liminf_{\eps\to 0}\kappa(\vartheta_\eps)\,\frac{|\nabla\vartheta_\eps|^2}{\vartheta_\eps^2}.
\]
The same arguments allow us to deduce~\eqref{aprioritheta} from our a priori estimates~\eqref{aprioriepsuniform}-\eqref{aprioriepsuniform3}. Concerning 
\eqref{inis}, we use that
$$\tau \in [0,T] \mapsto \int_\Omega (\rho s)(t,\cdot) \varphi dx 
\qquad \varphi \in {\mathcal C}^1_c(\Omega) \hbox{ and } \varphi\ge 0$$
is a sum of a non-decreasing function and a continuous function taking
advantage of the entropy inequality. This completes the proof.
\end{proof}
\section{Third Step: Fixed Point procedure and proof of main result} 
We are now ready to prove our main result. Denote
\[
E=L^{\alpha-\eps'}([0,T]\times \Pi^d)\cap L^1([0,\ T],\;W^{\lambda,1}(\Pi^d)),
\]
with $\eps'>0$ as in Theorem~\ref{existrhou} and any $0<\lambda<1$. For any $R>0$, denote as well
\[
E_R=\{\vartheta\in E\,|\;\|\vartheta\|_{L^{\alpha-\eps'}_{t,x}}+\|\vartheta\|_{L^1_t W^{\lambda,1}_{x}}\leq R\}.
\]
We now define the operator $L$ on $E$ that will have a fixed point. For a given $\vartheta_i$ in $E$, we may use Theorem~\ref{existrhou} to obtain  solutions $\rho$ and $u$ to \eqref{eq:00}-\eqref{eq:01}, and satisfying the estimates
\[
\rho\in L^\infty_t L^\gamma_x,\quad u\in L^2_t H^1_x.
\]
Hence $\rho$ and $u$ satisfy all the conditions in Theorem~\ref{solvevartheta}. We may hence apply Theorem~\ref{solvevartheta} to obtain $\vartheta=L(\vartheta_i)$ that solves the various estimates listed in the statement of Theorem~\ref{solvevartheta}. In particular by~\eqref{aprioritheta}, we immediately have,  from the bounds~\eqref{con:ass} on $\kappa$, that
\[
\int_0^T \int_{\TT^d}
 (1+|\vartheta|^\alpha)\,\frac{|\nabla_x\vartheta|^2}{\vartheta^2}\,dx\,dt<\infty. 
\]
Since $\vartheta\in L^\infty_t L^{\gamma_\vartheta}_x$ as well, Poincar\'e inequality immediately shows that $\vartheta\in L^\alpha_{t,x}$. Moreover we also have  directly from the inequality above that $\vartheta\in L^2_t H^1_x$. Hence for any $0<\lambda<1$, we have that $\vartheta\in L^1_t W^{\lambda,1}_x$.  This implies that $L:\, E\to E$.

\bigskip

We can also check that for any $R>0$, the image $L(E_R)$ is pre-compact in $E$. 
Consider therefore any sequence $\vartheta_n\in E_R$. 
From the estimates in Theorem~\ref{existrhou}, we have that for some $C_R$ and the $\rho_n,\,u_n$ obtained from $\vartheta_n$ satisfy the uniform bound 
\[
\sup_n \|\rho_n\|_{L^\infty_t L^\gamma_x}+\sup_n \|u_n\|_{L^2_t H^1_x}\leq C_R.
\]
Moreover Theorem~\ref{existrhou} also implies that $\rho_n$ is compact in $L^1_{t,x}$.

Consequently, the estimates~\eqref{aprioritheta} from Theorem~\ref{solvevartheta} yields, also for some $C_R$
\[
\sup_n \|L(\vartheta_n)\|_{L^\infty_t L^{\gamma_{\vartheta}}}
+\sup_n \int_0^T \int_{\TT^d}
  (1+|L(\vartheta_n)|^\alpha)\,\frac{|\nabla_xL(\vartheta_n)|^2}{L(\vartheta_n)^2}\,dx\,dt \leq C_R.
\]
We next observe that the entropy inequation~\eqref{eqslim} provides a uniform control on $\partial_t (\rho_n s_n)$. We may indeed rewrite~\eqref{eqslim} as
\[
\begin{split}
  \partial_t(\rho_n\,s_n)+\div(\rho_n\,s_n\,u_n)+M_n(t,x)= &\nabla_x\left(\frac{\kappa(L(\vartheta_n))}{L(\vartheta_n)}\, \nabla_xL(\vartheta_n)\right) +\frac{1}{L(\vartheta_n)}\,\cs_n:\nabla u_n\\
  &+\kappa(L(\vartheta_n))\,\frac{|\nabla L(\vartheta_n)|^2}{L(\vartheta_n)^2},
\end{split}
\]
where $s_n=s(\rho_n,L(\vartheta_n))$ and $M_n$ is a sequence of non-negative Radon measures. Hence by integrating in $t$ and $x$, we have the bound
\[\begin{split}
\int_0^T\int_{\TT^d} M(dt,dx)&=\int_{\TT^d} \rho^0\,s(\rho^0,\vartheta^0)-\int_{\TT^d}  \rho_n\,s_n|_{t=T}\,dx+\int_0^T\int_{\TT^d} \frac{1}{\vartheta_n}\,\cs_n:\nabla u_n\,dx\,dt\\
&+\int_0^T\int_{\TT^d}\kappa(L(\vartheta_n))\,\frac{|\nabla L(\vartheta_n)|^2}{L(\vartheta_n)}^2\,dx\,dt.
\end{split}
\]
From~\eqref{entropybounded} and \eqref{aprioritheta}, we deduce immediately that the total mass of $M_n$, as a measure in $t$ and $x$, is uniformly bounded in $n$. Using again \eqref{entropybounded} and \eqref{aprioritheta}, this implies that $\partial_t (\rho_n s_n)$ is uniformly bounded in $\mathcal{M}_{t,x}+L^1_t W^{-1,1}_x$, with $\mathcal{M}_{t,x}$ the set of Radon measures with bounded mass.

From the compactness of $\rho_n$, the compactness in space of $L(\vartheta_n)$, we have compactness in space for $\rho_n s_n$ and now compactness in time. Up to extracting a subsequence, we can hence deduce the pointwise convergence of $\rho_n s_n$. Following the same argument as in the proof of Theorem~\ref{solvevartheta}, and in particular assumption~\eqref{P7}, this yields the pointwise convergence of $L(\vartheta_n)$. 

From the uniform bounds on $L(\vartheta_n)$ in $L^\alpha_{t,x}$, this in turn implies the compactness of $L(\vartheta_n)$ in $L^{\alpha-\eps}_{t,x}$. By interpolation between $L^{\alpha-\eps}_{t,x}$ and $L^2_t H^1_x$, we also obtain compactness in $L^1_t W^{\lambda,1}_x$, showing that the image $L(E_R)$ is pre-compact.

\bigskip

The last and more delicate point to use the Leray-Schauder fixed point is to show that there exists $R$ s.t. for any $\vartheta\in E$ with $\vartheta=\ell\,L(\vartheta)$ for $\ell\in [0,\ 1]$, we have that $\vartheta\in E_R$. For such $\vartheta\in E$ with $\vartheta=\ell\,L(\vartheta)$, we start with recalling from Theorem~\ref{existrhou} that
\[\begin{split}
      &\int_{\TT^d} (\rho^\gamma+\rho\,|u|^2/2)\,dx+\int_0^t\int_{\TT^d} \cs:\nabla_x u\,dx\,ds\leq {\mathcal E}(\rho_0,u_0)\\
      &\qquad+\,\int_0^t\int_{\TT^d} \div_x u(s,x)\,(P(\vartheta(s,x),\rho(s,x))-P_0(\rho(s,x)))\,ds\,dx.\\
      \end{split}
\]
From the definition of $P_0$,
\[
P-P_0=\sum_{n=0}^N (B_n(\vartheta)-\bar B_n)\,\rho^n.
\]
Therefore the coefficients in $P-P_0$ behave in the same way as for $\tilde P_\eps$ that we had used before and we may use \eqref{Pdivu} with
\[
\int_0^t\int_{\TT^d} |P-P_0|\,|\div u |\,dx\,dt\leq \|u\|_{L^2_tH^1_x}\,\left(\frac{\|\rho\|_{L^\infty_t L^\gamma_x}^{\gamma/2}}{C}+C\,\|\vartheta\|_{L^2_t L^{\bar\alpha}_x}^{\bar\alpha/2}\right).
\]
This implies that
\[
\int_{\TT^d} (\rho^\gamma+\rho\,|u|^2/2)\,dx+\int_0^t\int_{\TT^d} |\nabla_x u|^2\,dx\,ds\leq C+C\,\|\vartheta\|_{L^2_t L^{\bar\alpha}_x}^{\bar\alpha}.
\]
We will use as an intermediary object the function
\begin{equation}\label{phi:v}
\phi_\vartheta(t)=C+ \int_0^t\int_{\TT^d} |\nabla_x u|^2(s,x)\,dx\,ds+\|\vartheta\|_{L^2([0,\ t],\ L^{\bar\alpha}(\Pi^d))}^{\bar\alpha}.
\end{equation}
It will be important to note that $\phi_\vartheta(t)$ is continuous in time for a fixed choice of $\vartheta$, even if it is of course not equi-continuous for all possible choices of $\vartheta$.
On the other hand since $\vartheta=\ell\,L(\vartheta)$, any norm of $\vartheta$ is bounded by the corresponding norm of $L(\vartheta)$. Using the Poincar\'e inequality~\eqref{Poincare}, and since $\bar \alpha<\alpha$, this proves that
\[
\|\vartheta\|_{L^2_t L^{\bar\alpha}_x}^{\bar\alpha}\leq C\,T^{\beta\,\bar\alpha/2}\,\left(\sup_{t\le T}\int_{\TT^d}L(\vartheta)^{\gamma_\vartheta}\,dx\right)^{\beta\,\bar\alpha/\gamma_\vartheta}+C_T\,\left(\int_0^T \int_{\TT^d} \kappa(L(\vartheta))\,\frac{|\nabla L(\vartheta)|^2}{L(\vartheta)^2}\,dx\,dt\right)^\beta,
\]
for some $\beta<1$.

Therefore the norms of $\rho$ and $u$ together with $\phi_{\vartheta}$ are controlled through the corresponding norms of $L(\vartheta)$,
\begin{equation}
  \begin{split}
    &\int_{\TT^d} (\rho^\gamma+\rho\,|u|^2/2)\,dx+\int_0^t\int_{\TT^d} |\nabla_x u|^2\,dx\,ds\leq C\,\phi_\vartheta(t)\\
    &\ \leq C^2+C^2\,T^{\beta\,\bar\alpha/2}\,\left(\sup_t \int_{\TT^d} L(\vartheta)^{\gamma_\vartheta}\,dx\right)^{\beta\,\bar\alpha/\gamma_\vartheta}\\
    &\hskip2cm +C_T\,\left(\int_0^t \int_{\TT^d} \kappa(L(\vartheta))\,\frac{|\nabla L(\vartheta)|^2}{L(\vartheta)^2}\,dx\,dt\right)^\beta,\label{firstcoupledapriori}
\end{split}
  \end{equation}
where the constant $C$ depends only on the initial data and more precisely the initial total energy.
Turning to Theorem~\ref{solvevartheta}, we recall the important Energy equality \eqref{energyequality}
which implies that
\[
\int_{\TT^d} g(t,x)\,dx \leq C+\int_0^t \int_{\TT^d} (\cs:\nabla u-\tilde P\,\div u)\,dx\,ds
\]
for all time. Using again~\eqref{Pdivu} and the Poincar\'e inequality~\eqref{Poincare} together with~\eqref{firstcoupledapriori} to control $\int |\nabla u|^2$, we obtain that
\begin{equation}
\begin{split}
    &\sup_{t\leq T}\int_{\TT^d} g(t,x)\,dx\leq C\,\phi_{\vartheta}(T)\\
    &\quad \leq C^2+C^2\,T^{\beta\,\bar\alpha/2}\,\left(\sup_t \int_{\TT^d}  L(\vartheta)^{\gamma_\vartheta}\,dx\right)^{\beta\,\bar\alpha/\gamma_\vartheta} \\
    &\hskip2cm +C_T\,\left(\int_0^T \int_{\TT^d} \kappa(L(\vartheta))\,\frac{|\nabla L(\vartheta)|^2}{L(\vartheta)^2}\,dx\,dt\right)^\beta.
\end{split}\label{gLvartheta}
\end{equation}
From~\eqref{lowerboundg}, this shows that
\[
\begin{split}
    &\sup_{t\leq T}\int_{\TT^d}  (L\vartheta)^{\gamma_\vartheta}(t,x)\,dx\leq C\,\phi_{\vartheta}(T)\\
    &\quad \leq C^2+C^2\,T^{\beta\,\bar\alpha/2}\,\left(\sup_t \int_{\TT^d} L(\vartheta)^{\gamma_\vartheta}\,dx\right)^{\beta\,\bar\alpha/\gamma_\vartheta}+C_T\,\left(\int_0^T \int_{\TT^d} \kappa(L(\vartheta))\,\frac{|\nabla L(\vartheta)|^2}{L(\vartheta)^2}\,dx\,dt\right)^\beta.
\end{split}
\]
On the other hand, since $\rho\,s\leq C\,g$, we also obtain by integrating the inequation~\eqref{eqslim},
and by combining the result with~\eqref{gLvartheta} and the previous inequality, we finally obtain the critical estimate
\begin{equation}
\begin{split}
    &\sup_{t\leq T}\int_{\TT^d} (L\vartheta)^{\gamma_\vartheta}(t,x)\,dx+\int_0^T \int_{\TT^d} \kappa(L(\vartheta))\,\frac{|\nabla L(\vartheta)|^2}{L(\vartheta)^2}\,dx\,dt\leq C\,\phi_\vartheta(t)\\
    &\quad \leq C^2+C^2\,T^{\beta\,\bar\alpha/2}\,\left(\sup_{t\le T} \int_{\TT^d}  L(\vartheta)^{\gamma_\vartheta}\,dx\right)^{\beta\,\bar\alpha/\gamma_\vartheta} \\
    &\hskip2cm +C_T\,\left(\int_0^T \int_{\TT^d} \kappa(L(\vartheta))\,\frac{|\nabla L(\vartheta)|^2}{L(\vartheta)^2}\,dx\,dt\right)^\beta.
\end{split}\label{criticalvartheta}
\end{equation}
Of course $\beta<1$ but unfortunately we only have $\bar\alpha\leq 2\,\gamma_\vartheta$ so that we could have that $\beta\,\bar\alpha/\gamma_\vartheta>1$, which prevents us from concluding at once and forces us to employ a much more careful argument.
 The key point is to use the time continuity of $\phi_\vartheta$ defined in \eqref{phi:v}. Since $\beta<1$, denote
\[
M=\sup_{X>0} C_{T=1}\, X^\beta-X/2,
\]
and
\[
\Phi =2\,C+3\,C_{T=1}\,C^2+2\,C_{T=1}\,M.
\]
Assuming that $\beta\,\bar\alpha/\gamma_\vartheta>1$, we choose $T\leq 1$ s.t.
\[
T^{\beta\,\bar\alpha/2}\,(C\,\Phi)^{\beta\,\bar\alpha/\gamma_\vartheta}<\min(1/2,1/2C).
\]
From the continuity of $\phi_{\vartheta}(t)$ in time, we may define $t_0\leq T$ the largest time s.t. $\phi_{\vartheta}(t)\leq  \Phi$.
From \eqref{criticalvartheta}, we also have that  
\[
\sup_{t\le t_0} \int_{\TT^d}  (L\vartheta)^{\gamma_\vartheta}(t,x)\,dx\leq C\,\Phi.
\]
From using a second time~\eqref{criticalvartheta}, we deduce that
\[
\int_0^T \int_{\TT^d} \kappa(L(\vartheta))\,\frac{|\nabla L(\vartheta)|^2}{L(\vartheta)^2}\,dx\,dt\leq 2\,C^2+2\,C^2\,T^{\beta\,\bar\alpha/2}\,(C\,\Phi)^{\beta\,\bar\alpha/\gamma_\vartheta}+2\,M<3\,C^2+2\,M.
\]
However at $t=T$, taking again~\eqref{criticalvartheta} now implies that
\[
\begin{split}
  \phi_{\vartheta}(T)&\leq C+C\,T^{\beta\,\bar\alpha/2}\,(C\,\Phi)^{\beta\,\bar\alpha/\gamma_\vartheta-1}+C_{T=1}(3\,C^2+2\,M)^\beta\\
  &<2\,C+3\,C_{T=1}\,C^2+2\,C_{T=1}\,M=\Phi.
\end{split}
\]
This shows that $t_0=T$ and yields a corresponding bound on $L(\vartheta)$ in $L^\infty_t L^{\gamma_\vartheta}$ and in $L^\alpha_{t,x}$ in terms of the initial energy.
The same argument that we used at the beginning of the proof then show that $L(\vartheta)\in E_R$ for some $R$ depending only on the initial energy. Since $\vartheta=\ell\,L(\vartheta)$, we also have that $\vartheta\in E_R$ and we have checked all assumptions of the Leray-Schauder fixed point theorem. Consequently, {\em for this choice of T}, there exists a fixed point $\vartheta\in E$ s.t. $\vartheta=L(\vartheta)$.

\bigskip

We hence obtain a solution $(\rho,\,u,\,\vartheta)$ to \eqref{eq:00}, \eqref{eq:01} and the entropy inequation~\eqref{entropyineq} on $[0,\ T]$. By combining the estimates in Theorems~\ref{existrhou} and~\ref{solvevartheta}, we also recover all a priori estimates in Theorem~\ref{main} and it only remains to derive the global energy bound to have Theorem~\ref{main} on $[0,\ T]$. 
We first add the energy bounds~\eqref{energy0} given by Theorem~\ref{existrhou} and~\eqref{energyequality} given by Theorem~\ref{solvevartheta}.
\[\begin{split}
&\int_{\TT^d}  \mathcal{E}_0(\rho,u)\,dx+\int_{\TT^d}  g(t,x)\,dx+\int_0^t\int_{\TT^d} \cs:\nabla_x u\,dx\,ds\leq\mathcal{E}_0(\rho^0,u^0)\,dx+\int_{\TT^d}  g^0(x)\,dx\\
&\qquad\qquad+\int_0^t \int_{\TT^d}  (\cs:\nabla_x u+(P(\vartheta(s,x),\rho(s,x))-P_0(\rho(s,x))-\tilde P)\,\div u)(s,x)\,ds\,dx.
\end{split}
\]
By recombining the terms, we obtain that
\begin{equation}
\begin{split}
&\int_{\TT^d} \mathcal{E}(\rho,u,\vartheta)\,dx+\int_{\TT^d}  \left(\sum_{n\leq N} \bar B_n\,\frac{\rho^n}{n-1}\right)\,dx\leq  \int_{\TT^d}  \mathcal{E}(\rho^0,\vartheta^0,m_0)\,dx \\
& +\int_{\TT^d}  \left(\sum_{n\leq N} \bar B_n\,\frac{(\rho^0)^n}{n-1}\right)\,dx
-\int_0^t \int_{\TT^d}  \div u(s,x)\,\left(\sum_{n\leq N} \bar B_n\,\rho^n\right)\,dx\,ds,
\end{split}\label{almostenergy}
\end{equation}
as we can easily recognize the total energy $\mathcal{E}$ of the system.
 We also emphasize that it was critical in the formulations of both Theorem~\ref{existrhou} and Theorem~\ref{solvevartheta} that the terms $P-P_0$ and $\tilde P$ do not contain $\rho^\gamma$, as we would not be able to make sense of $\div u\,\rho^\gamma$. However since $n\leq \gamma/2$, we have no difficulty in handling $\div u\,\rho^n$. In particular, we may easily remove the $\rho^n$ terms from \eqref{almostenergy} through the use of renormalized solutions. Since $\rho\in L^2_{t,x}$ and $u\in H^1_{t,x}$, the classical theory of renormalized solutions, from \cite{DL} for example, shows that for any smooth, bounded function $f(\rho)$, we have in the sense of distributions that
\[
\partial_t f(\rho)+\div(u\,f(\rho))=(f(\rho)-f'(\rho)\,\rho)\,\div u.
\]
By integrating over $x$, we have that
\[
\int_{\TT^d}  f(\rho)\,dx=\int_{\TT^d}  f(\rho^0)\,dx+\int_0^t\int_{\TT^d}  (f(\rho)-f'(\rho)\,\rho)\,\div u(s,x)\,dx\,ds.
\]
Since $\rho\in L^\infty_t L^\gamma_x$ and $\div u\in L^2_{t,x}$, we may now apply this to a sequence $f_\eps$ with $f_\eps(x)\to x^n$ as $\eps\to 0$ and obtain
\[
\int_{\TT^d}  \rho^n\,dx=\int_{\TT^d}  (\rho^0)^n\,dx-(n-1)\,\int_0^t\int_{\TT^d}  \rho^n\,\div u(s,x)\,dx\,ds,
\]
which leads to the desired energy inequality
\[
\int_{\TT^d}  \mathcal{E}(\rho,\vartheta,m)\,dx\leq 
 \int_{\TT^d}  \mathcal{E}(\rho_0,\vartheta_0, m_0)\,dx.
\]
\bigskip

The last remaining point is to extend this solution on $[0,\ T]$ to a solution that is global in time. This is naturally achieved by repeating the fixed point argument starting from $T$. To do so, we highlight the conditions on the initial data that Theorem~\ref{existrhou} and Theorem~\ref{solvevartheta} require: one needs $\vartheta^0\in L^{\gamma_\vartheta}_x$, $\rho^0\in L^\gamma_x$ together with $\rho^0\,|u^0|^2\in L^1_x$. Equivalently, we can require $\mathcal{E}(\rho^0,u^0,\vartheta^0)<\infty$. Indeed from~\eqref{lowerboundg}, we have that
\[
\int_{\TT^d}  \rho\,\frac{|u|^2}{2}\,dx+\int_{\TT^d}  \rho^{\gamma}\,dx+\int _{\TT^d} \vartheta^{\gamma_\vartheta}\,dx\leq C\,\mathcal{E}(\rho_0,\vartheta_0,m_0).
\]
As seen earlier in the proof, the time of existence $T$ is a function of the various norms of the initial data or again equivalently $\mathcal{E}(\rho_0,\vartheta_0,m_0)$. From the propagation of energy, we have that $\mathcal{E}(\rho,\vartheta,m)|_{t=T}$ is dominated by $\mathcal{E}(\rho_0,\vartheta^0, m_0)$ and therefore the existence time $T$ can be chosen uniformly whether starting at $t=0$, $t=T$ or $t=2T$. This ensures global existence.

\bigskip

\noindent {\bf Acknowledgments.}  The first author is partially supported by the SingFlows project, grant ANR-18-CE40-0027. The first author want to thank members of the Department of Mathematics and Huck Institutes in Pennsylvania State University for his  Shapiro visit in fall 2022. The second author is  partially supported by NSF DMS Grants 2205694, 2219397 and 2049020. The third author is partially supported by the National Natural Science Foundation of China (No. 12101396 and 12161141004).


\end{document}